\newtheorem{theorem}{Theorem}[section]
\newtheorem{lemma}[theorem]{Lemma}
\newtheorem{definition}{Definition}[section]
\newtheorem{remark}{Remark}
\newtheorem{assumption}{Assumption}[section]
\renewcommand{\Re}{\operatorname{Re}}
\newcommand{\tstep}{{\kappa}}
\newcommand{\tder}[1]{\partial_\tstep^{{#1}}}
\newcommand{\ctder}[1]{ \partial_{\tstep\ast}^{{#1}}}
\newcommand{\sder}{{\bar \partial}_t}
\renewcommand{\u}{u^h}
\newcommand{\vh}{v^h}
\newcommand{\w}{w^h}
\newcommand{\e}{e^h}
\newcommand{\Cinv}{C_{\text{inv}}}
\newcommand{\ag}{a_\gamma}
\begin{document}
\title{Numerical analysis of a wave equation for lossy media obeying a frequency power law}
\author{Katherine Baker \thanks{Maxwell Institute for Mathematical Sciences, Department of Mathematics, Heriot-Watt University, Edinburgh, UK, EH14 4AS.
({\tt kb54@hw.ac.uk})} \and Lehel Banjai \thanks{Maxwell Institute for Mathematical Sciences, Department of Mathematics, Heriot-Watt University, Edinburgh, UK, EH14 4AS.
({\tt l.banjai@hw.ac.uk})}}

\maketitle

\begin{abstract}
{We study a wave equation with a nonlocal time fractional damping term that models the effects of acoustic attenuation characterized by a frequency dependence power law. First we prove existence of a unique solution to this equation with particular attention paid to the handling of the fractional derivative. Then we derive an explicit time stepping scheme based on the finite element method in space and a combination of convolution quadrature and second order central differences in time. We conduct a full error analysis of the mixed time discretization and in turn the fully space time discretized scheme. Error estimates are given for both smooth solutions and solutions with a singularity at $t = 0$ of a type that is typical for equations involving fractional time-derivatives. A number of numerical results are presented to support the error analysis. \\

Keywords: fractional calculus; wave equation; convergence; existence and uniqueness.} 
\end{abstract}


\section{Introduction}

We are interested in  the initial boundary problem given by a wave equation with the addition of a time fractional damping term on a bounded Lipschitz domain $\Omega \subset \mathbb{R}^d$ with boundary $\partial\Omega$,
\begin{equation}\label{equ:PDE}
  \begin{aligned}
\frac1{c^2}\partial_t^2 u - \Delta u + \frac{\ag}{c} \partial_t^{\gamma+1} u &= f, & &\text{in } \Omega \times [0,T] \\
u(\cdot,0) &= u_0 & &\text{in } \Omega \\
\partial_t u(\cdot, 0) &= v_0 & &\text{in } \Omega \\
u &= 0 \quad & &\text{on }\partial \Omega \times[0,T],
\end{aligned}
\end{equation}
Here  $\gamma \in (-1,1)\setminus\{0\}$, $c(x) > 0$ is the wave speed, and 
\begin{equation}
  \label{eq:agamma}
 \ag = -\alpha_0\frac{4}{\pi} \Gamma(-\gamma-1)\Gamma(\gamma+2) \cos((\gamma+1)\pi/2),
\end{equation}
for some constant $\alpha_0 > 0$.
Note that $\ag > 0$ for $\gamma \in (-1,1)$ and that $\ag\rightarrow \infty$ as $\gamma \rightarrow \pm 1$. In this problem $\partial_t^\gamma u$ denotes the Caputo fractional derivative; see Definition \ref{def:caputoderiv} and we only consider $\gamma \neq 0$ throughout. For the rest of the paper, to simplify notation we set $c \equiv 1$, however we do keep track of the constant $a_\gamma$.

The interest in this problem stems from the  modelling of acoustic attenuation that occurs as a wave propagates through lossy media \cite[Chapter 4]{szabo:book}.
In applications, this is applied to modelling high intensity focused ultrasound therapy (HIFU) where the lossy media is biological tissue. HIFU is a noninvasive and nonionising medical treatment that focuses multiple high intensity acoustic pressure waves on a region, ablating it away \cite{terhaar2016}. It is known that acoustic attenuation obeys a frequency dependence characterized by the following power law,
\begin{equation}
\label{equ:freqpowerlaw}
S(\overrightarrow{x} + \Delta \overrightarrow{x}) = S(\overrightarrow{x}) e^{-\alpha(\omega)| \overrightarrow{x}|}
\end{equation}
where $S$ is the amplitude, $\Delta \overrightarrow{x}$ is the wave propagation distance, $\omega$ denotes frequency and $\alpha(\omega)$ the attenuation coefficient which defined by
\[
\alpha(\omega) = \alpha_0 |\omega |^\gamma
\]
for  $\gamma$ the frequency power exponent and $\alpha_0$ a constant relating to the media \cite{chen_holm2003}. Values of $\gamma$ have been determined by many experiments and field measurements, their results conclude that for most media $\gamma \in (0,2)$ \cite{duck:book}.  In this paper we restrict our attention to $\gamma \in (-1,1)$. For the range $\gamma \in (1,2)$ (and potentially also $(0,1)$) we prefer changing the weak damping term $\partial_t^{\gamma+1}$ with the strong damping of the form $(-\Delta) \partial_t^{\gamma-1}$  \cite{Caputo1967,holm2013,Kelly2016,wismer2006}. The techniques developed in this paper, can be used to analyze the strongly damped case for the full range $\gamma \in (0,2)$; see \cite{katiephd}. Another model that we can consider is the nonlinear fractional Westervelt equation \cite{holm2013}, this contains the strong damping term mentioned above and describes the nonlinear behaviour that the high intensity focusing causes during HIFU. Similar models appear also in fractional order viscoelasticity, similar to both the strong damping case (for $\gamma \in (0,1)$) \cite{larssonfrac1,Saedpanah} and the weak damping case  \cite{EndreZener}.

\cite{szabo1994} derived a wave equation with a convolution type operator to incorporate the effects of acoustic attenuation and showed that it adheres to the frequency dependence power law \eqref{equ:freqpowerlaw}. \cite{chen_holm2003} praise this model for its simplicity, due to it only containing two parameters, but criticise it for being difficult to implement initial conditions. To overcome this they refined the model to include the Caputo fractional derivative as the damping term. Furthermore within their paper they show that even with this modification the solutions still obey the power law.

Issues that arise within this model largely stem from the convolution based fractional derivative that requires us to store the full history when numerically solving with a time stepping scheme, making it expensive to compute, particularly in 3D. In the literature this is countered by replacing the fractional time derivative with a fractional Laplacian \cite{chen_holm2004,treeby2010}. However, using efficient quadrature methods \cite{banjai2019} will also be sufficient in reducing memory requirements and allows us to remain using the simpler operator. 

The paper is structured in the following way. Section~\ref{sec:dampedwaveequationmodel} begins by outlining necessary definitions and lemmas required to prove existence and uniqueness of solutions to \eqref{equ:PDE}. In Section~\ref{sec:fullydiscretesystem} we develop a numerical scheme to approximate solutions of \eqref{equ:PDE} using finite element methods in space and a combination of second order central difference and convolution quadrature in time. In Section~\ref{sec:convergenceanalysis} we study the convergence analysis of the mixed time dicretization used which allows us to determine the convergence order of the full scheme. We consider these results for smooth solutions and those with a singularity at $t=0$, since this is expected for equations with fractional time derivatives. Lastly, in Section~\ref{sec:numerics} we show results from the implementation of our scheme to support the theory from the previous section for both smooth and non smooth solutions of \eqref{equ:PDE}.


\section{A damped wave equation model}
\label{sec:dampedwaveequationmodel}
In this section we describe the model with the fractional in time weak damping and prove existence and uniqueness of the solution.  In order to do this we will require some properties of fractional calculus, that we list first.  For more detail on fractional calculus see textbooks by \cite{Diethelm2010} and \cite{OldhamSpanier}.

Before proceeding we introduce the following notation:
\begin{enumerate}[-]
\item  $(\cdot,\cdot)$ denotes the $L^2(\Omega)$ inner product
\item $\|\cdot\| = \|\cdot\|_{L^2(\Omega)}$ denotes the $L^2(\Omega)$ norm
\item  $\| \cdot \|_{p}$ denotes the $H^p (\Omega)$ norm for $p\geq 0$
\item $ \| \cdot \|_{-1} $ denotes the norm of the dual space  $H^{-1}(\Omega) = (H_0^{1}(\Omega))'$ 
\item $\mathcal{C}^k(I;X)$ denotes the space of functions $f: I \rightarrow X$, that are $k$ times continuously differentiable on the time-interval $I$ with an associated Hilbert space $X$. The interval $I$ will either be the closed interval $[0,T]$ or the half-open  interval $(0,T]$.
\item The space of $H^1(\Omega)$ functions with a zero trace on $\partial \Omega$ is denoted by $H^1_0(\Omega)$.
\end{enumerate}

We first give the definition of the Riemann-Liouville fractional integral.

\begin{definition}
\label{def:RLintegral}
For $\beta>0$ the Riemann-Liouville fractional integral of a function $f\in \mathcal{C}[0,T]$ is defined by
\[
I_t^\beta f(t) = \frac{1}{\Gamma (\beta)} \int_0^t (t-\tau)^{\beta -1} f(\tau) d\tau, \qquad t > 0, 
\]
where $\Gamma$ denotes the Gamma function. 
\end{definition} 

The fractional integral $I_t^\beta f(t)$ exists almost everywhere if $f \in L^1(0,T)$, see \cite{Samko,Diethelm2010}, but we will not make use of this reduced smoothness requirement except when $f(t) = t^\mu$ where for $\mu > -1$ we have that
\begin{equation}
\label{equ:integral_poly}
I_t^\beta t^\mu = \frac{\Gamma (\mu+1)}{\Gamma (\mu+\beta +1)} t^{\beta+\mu}.
\end{equation}

The Caputo fractional derivative is obtained by applying the Riemann-Liouville fractional integral to a classical derivative of the function. In the following definition, $\lceil \gamma \rceil$ denotes the ceiling of $\gamma$. 

\begin{definition} \label{def:caputoderiv}
For $\gamma>-1$ and $n=\lceil \gamma \rceil$ the left-sided Caputo fractional derivative of a function $f\in \mathcal{C}^{(n)}[0,T] $ is defined by
\begin{equation*}
\partial_t^\gamma f(t) :=  \begin{cases}
I_{t}^{-\gamma} f(t) & \gamma<0 \vspace{0.5cm}\\
\frac{1}{\Gamma(n-\gamma)}\int_0^t(t-\tau)^{n-\gamma-1} f^{(n)} (\tau) \,d\tau = I_{t}^{n-\gamma} f^{(n)}(t) & \gamma \notin \mathbb{N}, \, \gamma>0 \vspace{0.5cm}\\
\frac{d^n}{dt^n}f(t) & \gamma \in \mathbb{N},
\end{cases}
\end{equation*}
for $t > 0$. 
\end{definition}
Again, an explicit formula can be given in case where $f(t) = t^\mu$ for $\mu > 0$ we have that
\begin{equation}
\label{equ:caputo_poly}
\partial_t^\gamma t^\mu = \left\{
  \begin{array}{ll}
0 &  \gamma  > \mu, \; \mu \in \mathbb{N}_0\\
\frac{\Gamma (\mu+1)}{\Gamma (\mu+1-\gamma)} t^{\mu-\gamma}     & \gamma  \leq \mu. 
  \end{array}
\right.
\end{equation}
\begin{lemma}
\label{lemma:prop}
\begin{enumerate}[(a)]
\item \label{prop:alpha-1}For $\gamma>0$ and any $f \in \mathcal{C}^n[0,T]$ where $n= \lceil \gamma \rceil$
\begin{equation*}
\partial_t^\gamma f(t)= \partial_t^{\gamma-1} \partial_t f(t), \quad t \in [0,T].
\end{equation*}
\item \label{prop:alphaalpha-1} For $\gamma>0$ and any $f \in \mathcal{C}^n[0,T]$ such that $f^{(n-1)}(0)=0$, $n=\lceil \gamma \rceil$, 
\begin{equation*}
\partial_t^\gamma f(t) = \partial_t \partial_t^{\gamma-1} f(t), \quad t \in [0,T].
\end{equation*}
\item \label{prop:semigroup} The semi group property holds for fractional integrals: for any $f \in \mathcal{C}[0,T]$ and $\alpha,\beta \geq 0$
\[
I_t^\alpha I_t^{\beta}f(t) = I_t^{\alpha+\beta}f(t), \quad t \in [0,T].
\]

\end{enumerate}
\end{lemma}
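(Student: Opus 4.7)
The plan is to prove (c) first, then use it to streamline (b). For (c) I would follow the standard Fubini-plus-Beta-function route: write $I_t^\alpha I_t^\beta f(t)$ as a double integral over the triangle $\{0\le\tau\le s\le t\}$, swap the order of integration (justified by continuity of $f$ and integrability of the kernels), and in the inner integral substitute $s = \tau + r(t-\tau)$. The inner integral then collapses to $(t-\tau)^{\alpha+\beta-1} B(\beta,\alpha) = (t-\tau)^{\alpha+\beta-1}\Gamma(\alpha)\Gamma(\beta)/\Gamma(\alpha+\beta)$, so after multiplying by the prefactors one recovers exactly the kernel of $I_t^{\alpha+\beta} f$. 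The boundary cases $\alpha=0$ or $\beta=0$ are handled by the convention $I_t^0 = \mathrm{id}$.

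For (a) I would just unpack the piecewise Definition~\ref{def:caputoderiv} on both sides and match terms. The integer case $\gamma\in\mathbb{N}$ is immediate from the classical interpretation. Otherwise, with $n=\lceil\gamma\rceil$: if $\gamma\in(0,1)$ then $\gamma-1\in(-1,0)$, so the definition forces $\partial_t^{\gamma-1}(\partial_t f) = I_t^{1-\gamma}(\partial_t f)$, which matches $\partial_t^\gamma f = I_t^{1-\gamma} f'$; if $\gamma>1$ is non-integer, then $\lceil\gamma-1\rceil = n-1$, and applying the Caputo formula to $\partial_t f$ gives $I_t^{(n-1)-(\gamma-1)}(\partial_t f)^{(n-1)} = I_t^{n-\gamma} f^{(n)} = \partial_t^\gamma f$.

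For (b) the central device is integration by parts inside the fractional integral that represents $\partial_t^{\gamma-1} f$. If $\gamma>1$ is non-integer, Definition~\ref{def:caputoderiv} directly yields $\partial_t^{\gamma-1} f = I_t^{n-\gamma} f^{(n-1)}$; if $\gamma\in(0,1)$ it yields $\partial_t^{\gamma-1} f = I_t^{1-\gamma} f$. In either case I would integrate by parts once inside the defining integral: the boundary term at $\tau = t$ vanishes because the resulting power of $(t-\tau)$ is strictly positive ($n-\gamma>0$), and the term at $\tau = 0$ vanishes by the hypothesis $f^{(n-1)}(0)=0$. The upshot is
\[
\partial_t^{\gamma-1} f \;=\; I_t^{n-\gamma+1} f^{(n)},
\]
which by (c) factors as $I_t^1 I_t^{n-\gamma} f^{(n)}$. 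Applying $\partial_t$ cancels the outer antiderivative and leaves $I_t^{n-\gamma} f^{(n)} = \partial_t^\gamma f$. The integer case $\gamma = n$ is trivial since $\partial_t^{n-1} f = f^{(n-1)}$.

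The main obstacle is purely bookkeeping: one must select the correct branch of Definition~\ref{def:caputoderiv} for $\partial_t^{\gamma-1}$ depending on whether $\gamma<1$, $=1$, or $>1$, and in (b) verify the boundary-term vanishing under the stated regularity. No deep ingredient beyond Fubini, a Beta-integral substitution, and a single integration by parts is needed.
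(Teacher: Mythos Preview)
Your argument is correct and follows the standard textbook route (Fubini plus the Beta-integral identity for (c), direct unpacking of Definition~\ref{def:caputoderiv} for (a), and a single integration by parts together with (c) for (b)). The paper itself states Lemma~\ref{lemma:prop} without proof, treating it as standard background from fractional calculus with implicit reference to the texts of Diethelm and Oldham--Spanier cited just before it; there is therefore no ``paper's own proof'' to compare against. Your bookkeeping across the three branches of Definition~\ref{def:caputoderiv} is accurate, and the boundary-term vanishing in (b) is justified exactly as you say: the power $n-\gamma>0$ kills the contribution at $\tau=t$, and the hypothesis $f^{(n-1)}(0)=0$ kills the contribution at $\tau=0$.
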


From the definition of the fractional derivative we see that the model \eqref{equ:PDE} is a Volterra integro-differential equation. To proceed with the analysis we will need a result on the corresponding ordinary  differential equation.

\begin{theorem}
\label{thm:ODE}
For $f\in \mathcal{C}[0,T]$, $\gamma \in(-1,1)$, $\lambda \in \mathbb{C}$, $\ag > 0$,  and $u_0, \, v_0 \in \mathbb{R}$ there exists a unique solution to the following initial value problem: Find $u \in \mathcal{C}^2[0,T]$ such that
\begin{align}
 u^{\prime \prime}  + \lambda u + \ag \partial_t^{\gamma+1} u = f &\quad \text{in }[0,T] \nonumber\\
u(0) =& u_0 \label{equ:ODE}\\
u^\prime(0) =& v_0. \nonumber
\end{align}
 Furthermore, for $\gamma \in (-1,0)$
\[
u''(t) = f(t)-\lambda u_0-\frac{\ag}{\Gamma(1-\gamma)}t^{-\gamma}v_0+O(t), 
\]
and for $\gamma \in (0,1)$
\[
u''(t) = f(t)-\lambda u_0-\frac{\ag}{\Gamma(2-\gamma)}(f(0)-\lambda u_0) t^{1-\gamma} + o(t^{1-\gamma}) 
\]
as $t \rightarrow 0^+$.
\end{theorem}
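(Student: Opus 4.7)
The plan is to reduce \eqref{equ:ODE} to a Volterra integral equation with weakly singular kernel, solve it by a standard contraction argument, and then extract the short-time asymptotics of $u''$ by inserting the regularity so obtained back into the ODE. Using Lemma~\ref{lemma:prop}(\ref{prop:alpha-1}), the fractional damping $\partial_t^{\gamma+1}u$ equals $I_t^{-\gamma}u'$ for $\gamma\in(-1,0)$ and $I_t^{1-\gamma}u''$ for $\gamma\in(0,1)$. Integrating \eqref{equ:ODE} twice against $t$ and invoking the semigroup property of fractional integration, both cases reduce to
\begin{equation*}
  u(t) = u_0 + v_0 t + I_t^2 f(t) - \lambda I_t^2 u(t) - a_\gamma I_t^{1-\gamma} w(t),
\end{equation*}
where $w=u-u_0$ in the first case and $w=u-u_0-v_0 t$ in the second. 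The right-hand side defines an affine operator on $\mathcal{C}[0,T]$ whose linear part is a sum of convolutions with $L^1$ kernels, so standard Volterra theory (Picard iteration in a weighted sup-norm, using that the $n$-fold convolution has norm bounded by $(CT)^{n(1-\gamma)}/\Gamma(n(1-\gamma)+1)$) yields a unique fixed point $u\in\mathcal{C}[0,T]$.

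Upgrading $u\in\mathcal{C}[0,T]$ to $u\in\mathcal{C}^2[0,T]$ requires some care. For $\gamma\in(-1,0)$ one can differentiate the integral equation directly, using $\frac{d}{dt}I_t^{2}u$ and $\frac{d}{dt}I_t^{1-\gamma}(u-u_0)$, which are both continuous as soon as $u$ is; a second differentiation, together with the explicit identities \eqref{equ:integral_poly}, yields $u''\in\mathcal{C}[0,T]$. For $\gamma\in(0,1)$ the situation is more delicate because the fractional term already involves $u''$; here I would rephrase the differentiated equation as a second Volterra equation of the form $u'' + a_\gamma I_t^{1-\gamma} u'' = f-\lambda u$, which, being a second-kind Volterra equation with the weakly singular $L^1$ kernel $(t-s)^{-\gamma}/\Gamma(1-\gamma)$, is uniquely solvable in $\mathcal{C}[0,T]$ by the same Picard argument, showing a posteriori that $u\in\mathcal{C}^2[0,T]$.

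With $u\in\mathcal{C}^2[0,T]$ in hand, the asymptotic expansion of $u''$ follows by analysing $u''(t)=f(t)-\lambda u(t) - a_\gamma \partial_t^{\gamma+1}u(t)$ as $t\rightarrow 0^+$. For $\gamma\in(-1,0)$, decompose $u'(s)=v_0+(u'(s)-v_0)$ inside $I_t^{-\gamma}u'$: the constant part gives $\frac{v_0}{\Gamma(1-\gamma)}t^{-\gamma}$ via \eqref{equ:integral_poly}, while $u'(s)-v_0=O(s)$ together with a beta-function bound contributes $O(t^{1-\gamma})\subset O(t)$; combined with $\lambda u(t)=\lambda u_0+O(t)$ this gives the stated expansion. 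For $\gamma\in(0,1)$, continuity of $u''$ at $t=0$ forces $u''(0)=f(0)-\lambda u_0$ (since $I_t^{1-\gamma}u''\to 0$ as $t\rightarrow 0^+$); decomposing $u''(s)=u''(0)+(u''(s)-u''(0))$ inside $I_t^{1-\gamma}u''$ produces the leading term $\frac{a_\gamma(f(0)-\lambda u_0)}{\Gamma(2-\gamma)}t^{1-\gamma}$ via \eqref{equ:integral_poly}, with $o(t^{1-\gamma})$ remainder by continuity of $u''$; noting that $\lambda u(t)=\lambda u_0+O(t)=\lambda u_0+o(t^{1-\gamma})$ for this range of $\gamma$ completes the claim.

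The main technical obstacle in this plan is the regularity bootstrap, in particular realising for $\gamma\in(0,1)$ that the equation for $u''$ itself forms a second-kind Volterra equation with integrable kernel, so that no paradox arises from the fractional term already containing the top-order derivative. Everything else is routine once the integral kernels are identified and \eqref{equ:integral_poly} is applied for the leading singular term.
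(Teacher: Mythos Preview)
Your reduction to a second-kind Volterra equation is the same core idea the paper uses, but the execution differs in a way worth noting. Rather than integrating the ODE twice and solving for $u\in\mathcal{C}[0,T]$ first, the paper substitutes $v=u''$ and $u(t)=u_0+v_0 t+\int_0^t(t-\tau)v(\tau)\,d\tau$ directly into \eqref{equ:ODE}, obtaining in one step a Volterra equation for $v$ with continuous (case $\gamma\in(-1,0)$) or weakly singular (case $\gamma\in(0,1)$) kernel. Solving this for $v\in\mathcal{C}[0,T]$ immediately yields $u\in\mathcal{C}^2[0,T]$ via the integral representation, so no regularity bootstrap is required at all. Your approach arrives at essentially the same Volterra equation for $u''$ in the case $\gamma\in(0,1)$, but only after the detour of first solving for $u$; the paper's route is shorter and cleaner.

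There is also a small gap in your bootstrap for $\gamma\in(0,1)$: having solved $v+a_\gamma I_t^{1-\gamma}v=f-\lambda u$ with the already-known $u$ on the right, you still owe the reader the argument that this $v$ actually equals $u''$. One way is to set $\tilde u=u_0+v_0 t+I_t^2v$, apply $I_t^2$ to the equation for $v$, and observe that $e:=\tilde u-u$ satisfies $e+a_\gamma I_t^{1-\gamma}e=0$, forcing $e\equiv 0$; this step is routine but should be stated. For the short-time asymptotics, your explicit decomposition of $u'$ (resp.\ $u''$) into its value at $0$ plus a continuous remainder is exactly the content of the auxiliary result the paper proves and invokes as Theorem~\ref{thm:brunner} in the appendix, so here the two arguments coincide.
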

\begin{proof}
For $\gamma = 0$ we omit the proof since in this case the result follows from standard ODE theory.

Using the definition of the Caputo derivative  in \eqref{equ:ODE} gives
\begin{equation}
\label{equ:ODE_with_caputo}
u^{\prime \prime}  + \lambda u + \frac{\ag}{\Gamma (n-\gamma) } \int_0^t (t - \tau)^{n-\gamma -1} u^{(n+1)}(\tau) \, d\tau = f,
\end{equation}
where $n = \lceil \gamma \rceil$.
Next, define $v=u^{\prime \prime}$, i.e., 
\begin{equation}
\label{equ:ODE_u}
u(t) = u_0 + tv_0 + \int_0^t (t-\tau) v(\tau) \, d\tau.
\end{equation}

We  first consider the case $\gamma \in (0,1)$, i.e., $n = 1$. By substituting $v$ into   \eqref{equ:ODE_with_caputo}  we obtain a Volterra integral equation for $v$:
\[
v(t) = f - \lambda u_0 - \lambda t v_0 - \lambda \int_0^t  (t-\tau) v(\tau)\, d\tau - \frac{\ag}{\Gamma (1-\gamma)} \int_0^t (t-\tau)^{-\gamma} v(\tau) \, d\tau.
\]
This Volterra equation for $v$ can be written in the form investigated 
in \cite[Section 6.1.2]{brunner:book}, where the existence of a unique solution $v \in \mathcal{C}[0,T]$ is proved;  see Theorem~\ref{thm:brunner} in the appendix. The uniqueness of $u$ follows by the smoothness assumption $u \in \mathcal{C}^2[0,T]$ and the initial data requirements. Hence $u$ defined by \eqref{equ:ODE_u} is in  $\mathcal{C}^2[0,T]$ and solves the original equation. The asymptotic behaviour of $u''$ follows from Theorem~\ref{thm:brunner}, giving
\[
  \begin{split}    
u''(t) &= f(t)-\lambda u_0-\frac{\ag}{\Gamma(2-\gamma)}v(0) t^{1-\gamma} + o(t^{1-\gamma})\\
&=f(t)-\lambda u_0-\frac{\ag}{\Gamma(2-\gamma)}(f(0)-\lambda u_0) t^{1-\gamma} + o(t^{1-\gamma}).
  \end{split}
\]

If $\gamma \in (-1,0)$, the  equation  is given by
\begin{equation}
\label{equ:ODE01}
u^{\prime \prime}  + \lambda u + \frac{\ag}{\Gamma (-\gamma) } \int_0^t (t - \tau)^{-\gamma-1 } u^{\prime}(\tau) \, d\tau = f.
\end{equation}
Again we wish to let $v = u^{\prime \prime}$ but to do so directly we must write the integral in the above equation as
$$\frac{1}{\Gamma (-\gamma) } \int_0^t (t - \tau)^{-\gamma-1 } u^{\prime}(\tau) \, d\tau = \frac{1}{\Gamma (1-\gamma)} t^{-\gamma} v_0 + \frac{1}{\Gamma (1-\gamma)} \int_0^t (t-\tau)^{-\gamma} v(\tau) \, d\tau.$$
Now we substitute the above equation and \eqref{equ:ODE_u} into the ODE \eqref{equ:ODE01} and we have,
\begin{align*}
v(t) &= f - \lambda u_0 - \lambda t v_0 - \frac{\ag}{\Gamma(1-\gamma)} t^{-\gamma} v_0 - \int_0^t \lambda (t-\tau) v(\tau)\, d\tau \\
&\qquad \qquad- \frac{\ag}{\Gamma (1-\gamma)} \int_0^t (t-\tau)^{-\gamma} v(\tau) \, d\tau. 
\end{align*} 
This is now a Volterra integral equation with a continuous kernel. Hence, see \cite[Theorem~2.1.5]{brunner:book} and  Theorem~\ref{thm:brunner} in the appendix, a unique  solution  $v \in \mathcal{C}[0,T]$ exists and consequently also the solution $u \in \mathcal{C}^2[0,T]$ of the original equation. The asymptotic behaviour of $u''$ again follows from Theorem~\ref{thm:brunner}.
\end{proof}

Next we show that the time-fractional term satisfies a positivity result that will in turn imply its damping properties.

\begin{lemma}\label{lem:positivity_cont}
  Let $f \in \mathcal{C}([0,T]; L^2(\Omega))$, $g \in \mathcal{C}^1([0,T]; L^2(\Omega))$ and let $\gamma \in (0,1)$.  Then the following hold:
  \begin{enumerate}[(a)]
\item  \label{lem:positivity_cont_b}
\[
\int_0^T  (\partial_tI_t^\gamma g(t),g(t))dt \geq \frac{(T/2)^{-1+\gamma}}{\Gamma(\gamma)} \int_0^T \|g(t)\|^2 dt.
\]
  \item \label{lem:positivity_cont_a} 
\[
\int_0^T  (I_t^\gamma f(t),f(t)) dt \geq \frac{(T/2)^{-\gamma}}{\Gamma(1-\gamma)} \int_{0}^T  \|I_t^\gamma f(t)\|^2 dt.
\]
  \end{enumerate}
\end{lemma}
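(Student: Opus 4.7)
The plan is to establish an exact identity that represents $\int_0^T (\partial_t I_t^\gamma g, g)\,dt$ as a sum of two manifestly non-negative quantities, and then apply Jensen's inequality to the kernel $k_\gamma(t) := t^{\gamma-1}/\Gamma(\gamma)$ to extract the quantitative lower bound. Part (b) will follow from part (a) via the substitution $G := I_t^\gamma f$.

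The key identity I would prove is
\[
  \int_0^T (\partial_t I_t^\gamma g, g)\,dt = \frac{1}{2}\int_0^T \|g(t)\|^2 \bigl[k_\gamma(t) + k_\gamma(T-t)\bigr]dt + \frac{1-\gamma}{4\Gamma(\gamma)}\int_0^T\!\!\int_0^T \frac{\|g(t)-g(s)\|^2}{|t-s|^{2-\gamma}}\,dt\,ds.
\]
To derive it I would regularize the kernel as $k_\gamma^\veps(t) := (t+\veps)^{\gamma-1}/\Gamma(\gamma)$, $\veps>0$, so that differentiation under the integral yields $\partial_t I_t^{\gamma,\veps}g(t) = k_\gamma(\veps) g(t) + \int_0^t k_\gamma'(t-s+\veps)g(s)\,ds$ as a well-defined $L^2(\Omega)$-valued function. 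The ensuing double integral in $\int_0^T (\partial_t I_t^{\gamma,\veps}g, g)\,dt$ is symmetrized via $s\leftrightarrow t$ (since $k_\gamma'(|t-s|+\veps)$ is symmetric), then expanded using the Hilbert-space polarization $(g(t),g(s)) = \tfrac12(\|g(t)\|^2 + \|g(s)\|^2) - \tfrac12\|g(t)-g(s)\|^2$. A short direct computation gives $\int_0^T k_\gamma'(|t-s|+\veps)\,ds = k_\gamma(t+\veps) + k_\gamma(T-t+\veps) - 2k_\gamma(\veps)$, so the boundary divergence $k_\gamma(\veps)\int\|g\|^2\,dt$ cancels exactly with the $-2k_\gamma(\veps)$ contribution; sending $\veps \to 0$ then produces the identity above.

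With the identity in hand, dropping the non-negative Gagliardo term and applying Jensen's inequality to $k_\gamma$, which is convex on $(0,\infty)$ (its second derivative $(\gamma-1)(\gamma-2)r^{\gamma-3}/\Gamma(\gamma)$ is positive for $\gamma\in(0,1)$), yields
\[
  \tfrac{1}{2}\bigl[k_\gamma(t)+k_\gamma(T-t)\bigr] \geq k_\gamma\!\left(\tfrac{t+(T-t)}{2}\right) = k_\gamma(T/2) = \frac{(T/2)^{\gamma-1}}{\Gamma(\gamma)},
\]
which proves (a). For (b), set $G := I_t^\gamma f$; since $\gamma>0$, $G(0)=0$, and Lemma~\ref{lemma:prop}(c) combined with $\partial_t I_t^1 f = f$ gives $f = \partial_t I_t^{1-\gamma} G$. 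Applying (a) with $\gamma$ replaced by $1-\gamma$ and $g = G$ then gives
\[
  \int_0^T (I_t^\gamma f, f)\,dt = \int_0^T (G, \partial_t I_t^{1-\gamma} G)\,dt \geq \frac{(T/2)^{-\gamma}}{\Gamma(1-\gamma)}\int_0^T \|I_t^\gamma f\|^2\,dt.
\]

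The main obstacle is the passage to the limit $\veps\to 0$ in the Gagliardo-type double integral. For (a) this is immediate under $g \in \mathcal{C}^1([0,T];L^2(\Omega))$, since $\|g(t)-g(s)\|^2 \leq C|t-s|^2$ makes the integrand dominated by $C|t-s|^\gamma$, which is integrable on $[0,T]^2$ for $\gamma>-1$. For (b), a subtlety is that $G = I_t^\gamma f$ is typically only $\gamma$-H\"older in time, not $C^1$; however, the argument only requires the corresponding Gagliardo integrand (with $\gamma\to 1-\gamma$, i.e.\ $\|G(t)-G(s)\|^2|t-s|^{-1-\gamma}$) to be integrable on $[0,T]^2$, which the H\"older bound $\|G(t)-G(s)\|^2\leq C|t-s|^{2\gamma}$ together with $\gamma>0$ guarantees.
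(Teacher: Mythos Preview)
Your proof is correct and follows essentially the same route as the paper: both arguments rest on the inequality
\[
\int_0^T (\partial_t I_t^\gamma g,g)\,dt \;\geq\; \frac{1}{2\Gamma(\gamma)}\int_0^T\bigl[(T-t)^{\gamma-1}+t^{\gamma-1}\bigr]\|g(t)\|^2\,dt
\]
followed by minimization of the kernel (your Jensen step), and both reduce (b) to (a) via $G=I_t^\gamma f$ and the semigroup property. The only difference is that the paper cites this inequality from the literature, whereas you derive it from scratch via the regularized identity with the Gagliardo remainder; you are also more careful than the paper in noting that $G=I_t^\gamma f$ need not be $\mathcal C^1$ and checking that the $\gamma$-H\"older regularity of $G$ still keeps the Gagliardo integrand integrable.
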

\begin{proof}
The proof of an analogue of (\ref{lem:positivity_cont_b}) for Caputo fractional derivatives is given in \cite{babis_banjai}. The result can also be deduced from Lemma~1.7.2 in \cite{siskova:phd} and Lemma~3.1 in \cite{EndreZener}, which for our case gives
$$
\int_0^T (\partial_t I^\gamma g(t),g(t))\, dt \geq \frac{1}{2\Gamma(\gamma)} \int_0^T \left((T-t)^{\gamma-1}+t^{\gamma-1}\right) \|g(t)\|^2 \,dt .
$$
Minimizing the kernel gives the required result
\begin{equation}
\label{equ:positivity_ourconst}
 \int_0^T  (\partial_tI_t^\gamma g(t),g(t))dt \geq
\frac{(T/2)^{\gamma-1}}{\Gamma(\gamma)}\int_0^T   \|g(\tau)\|^2  d\tau.
\end{equation}

The second part of the lemma follows by using the semigroup property, see Lemma~\ref{lemma:prop}(\ref{prop:semigroup}),
\[
\int_0^T  (I_t^\gamma f(t),f(t)) dt = 
\int_0^T  (I_t^\gamma f(t),\partial_t I_t^{1-\gamma} I_t^\gamma f(t)) dt
\]
and then applying part (\ref{lem:positivity_cont_b}).
\end{proof}

\begin{remark}
 The inequality \eqref{equ:positivity_ourconst} but with a different constant is given in  \cite[Theorem~A.1]{mclean2012}. The constant derived in \cite[Theorem~A.1]{mclean2012}, denoted $C_1$,  and the constant we derive in \eqref{equ:positivity_ourconst}, denoted $C_2$, are given by
\begin{equation}\label{eq:Cs}
C_{1}(\gamma, T) = \pi^{1-\gamma} \frac{(1-\gamma)^{1-\gamma}}{(2-\gamma)^{2-\gamma}}\sin\left( \frac{1}{2}\pi\gamma \right)T^{\gamma-1} \quad \text{and} \quad C_{2}(\gamma,T) = \frac{(T/2)^{\gamma-1}}{\Gamma(\gamma)}.
\end{equation}
A numerical test indicates that $C_2 \geq C_1$ for all $\gamma \in (0,1)$, i.e., it is more optimal; see Fig~\ref{fig:mcleanconst}. In this test we set $T=1$ since both constants have the same dependence on $T$.
\end{remark}

\begin{figure}[H]
    \centering
    \includegraphics[scale=0.5]{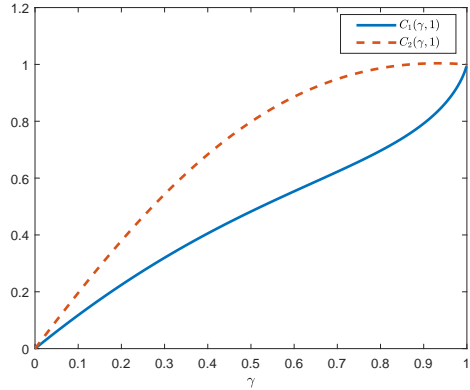}
    \caption{ \label{fig:mcleanconst} Comparison of the constants  \eqref{eq:Cs}. The experiment indicates that $C_2(\gamma,T) \geq C_1(\gamma,T)$, i.e., that $C_2(\gamma,T)$ is more optimal.}
\end{figure}


Next we investigate the existence and uniqueness of the solution of the weak formulation of \eqref{equ:PDE}: Find $u(t) \in H^1_0(\Omega)$ such that
\begin{equation}
  \label{eq:PDE_weak}
  (\partial_t^2u,v)+(\nabla u, \nabla v)+(\ag \partial_t^{\gamma+1}u,v) = (f,v), \qquad
\text{for all } v \in H^1_0(\Omega).
\end{equation}

\begin{theorem}
\label{thm:exist01}
Given $u_0 \in H_0^1(\Omega)$, $v_0 \in L^2 (\Omega) $, $f \in L^2(0,T; L^2(\Omega))$ and $\gamma \in (-1,0)$ a unique weak solution $u \in L^\infty(0,T;H_0^1(\Omega))$ of \eqref{eq:PDE_weak} exists and we have that $\partial_t^{\gamma+1} u \in L^2(0,T;L^2(\Omega))$, $\partial_t^2 u \in L^2(0,T;H^{-1}(\Omega))$ and $\partial_t u \in L^\infty (0,T;L^2(\Omega))$.
\end{theorem}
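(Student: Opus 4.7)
The plan is to build the solution by a Galerkin approximation and propagate its regularity to the limit through energy estimates that exploit the positivity of the fractional damping. First I fix the $L^2$-orthonormal basis $\{\phi_j\}$ of eigenfunctions of $-\Delta$ with homogeneous Dirichlet boundary data and seek $u_N(t) = \sum_{j=1}^N c_j(t)\phi_j$ satisfying the weak formulation \eqref{eq:PDE_weak} against every $v \in V_N = \mathrm{span}\{\phi_1,\dots,\phi_N\}$, with initial data given by the $L^2$-projections of $u_0$ and $v_0$ onto $V_N$. Because the eigenfunctions diagonalize $-\Delta$, the Galerkin system decouples into $N$ scalar Volterra integro-differential equations of precisely the form covered by Theorem \ref{thm:ODE} (each with its own $\lambda = \lambda_j$), so each coefficient $c_j \in \mathcal{C}^2[0,T]$ exists uniquely, and hence $u_N \in \mathcal{C}^2([0,T];V_N)$ is well defined.

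Next, since $\gamma \in (-1,0)$ gives $\gamma+1 \in (0,1)$, Definition \ref{def:caputoderiv} yields $\partial_t^{\gamma+1} u_N = I_t^{-\gamma}\partial_t u_N$. Testing the Galerkin equation with $v = \partial_t u_N \in V_N$ (an admissible choice at the discrete level) and integrating from $0$ to $t$ produces
\[
\tfrac12\|\partial_t u_N(t)\|^2 + \tfrac12\|\nabla u_N(t)\|^2 + \ag \int_0^t (I_s^{-\gamma}\partial_s u_N, \partial_s u_N)\,ds
= \text{(initial data terms)} + \int_0^t (f,\partial_s u_N)\,ds.
\]
Lemma \ref{lem:positivity_cont}(\ref{lem:positivity_cont_a}), applied with its parameter set to $-\gamma \in (0,1)$ and its $f$ set to $\partial_s u_N$, bounds the third term from below by a positive multiple of $\int_0^t \|\partial_s^{\gamma+1} u_N\|^2\,ds$. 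Combining this with Cauchy–Schwarz and Young's inequality on the right, and then Gronwall, I obtain bounds, uniform in $N$, on $\|\partial_t u_N\|_{L^\infty(0,T;L^2)}$, $\|\nabla u_N\|_{L^\infty(0,T;L^2)}$, and $\|\partial_t^{\gamma+1} u_N\|_{L^2(0,T;L^2)}$; these use only $u_0 \in H^1_0$, $v_0 \in L^2$, and $f \in L^2(0,T;L^2)$.

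These bounds furnish a weakly-$\ast$ convergent subsequence with limit $u$ in the claimed spaces. Since $I_t^{-\gamma}\colon L^2(0,T;L^2) \to L^2(0,T;L^2)$ is a bounded linear operator, $\partial_t^{\gamma+1} u_N \rightharpoonup \partial_t^{\gamma+1} u$ in $L^2(0,T;L^2)$, and passage to the limit in \eqref{eq:PDE_weak} is routine. The second-order time regularity then follows by reading the equation as
\[
\partial_t^2 u = f + \Delta u - \ag\, \partial_t^{\gamma+1} u,
\]
each summand of which lies in $L^2(0,T;H^{-1}(\Omega))$. For uniqueness, the difference $w$ of two solutions satisfies the homogeneous problem with zero initial data; once we have an admissible energy identity, Lemma \ref{lem:positivity_cont}(\ref{lem:positivity_cont_a}) again forces $\partial_t w = 0$, and the zero initial condition finishes the argument.

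The main obstacle I anticipate is justifying the energy identity at the continuous level, since $\partial_t u$ and $\partial_t w$ are only in $L^\infty(0,T;L^2)$ and are not a priori admissible test functions in $H^1_0(\Omega)$. The cleanest workaround is to perform all estimates at the Galerkin level (where $\partial_t u_N \in V_N \subset H^1_0$ trivially) and transfer them to $u$ via weak lower semicontinuity of the norms; for uniqueness, one can either repeat the Galerkin projection on $w$ or apply a Steklov-type time mollification that commutes suitably with $I_t^{-\gamma}$, using the already established regularity $\partial_t^2 w \in L^2(0,T;H^{-1})$ together with $\partial_t w \in L^\infty(0,T;L^2)$ to interpret $\langle \partial_t^2 w, \partial_t w\rangle$ as the derivative of $\tfrac12\|\partial_t w\|^2$.
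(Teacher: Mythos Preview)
Your overall plan matches the paper's: Galerkin approximation via the Dirichlet eigenbasis, existence of the approximants from Theorem~\ref{thm:ODE}, energy estimates by testing with $\partial_t u_N$, and weak compactness to pass to the limit. Two tactical differences are worth noting. First, you extract the bound on $\|\partial_t^{\gamma+1} u_N\|_{L^2(0,T;L^2)}$ directly from the energy identity via the full strength of Lemma~\ref{lem:positivity_cont}(\ref{lem:positivity_cont_a}); the paper instead drops the fractional term (using only its nonnegativity) to get the energy bound, and afterwards bounds $\partial_t^{\gamma+1} u_m = I_t^{-\gamma}\partial_t u_m$ separately by Young's convolution inequality applied to the kernel $t^{-\gamma-1}\in L^1$. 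Your route is a little more economical here. Second, for uniqueness the paper sidesteps the obstacle you correctly flag (that $\partial_t w \notin H^1_0$) by the classical device of testing with $v(t) = \int_t^s u(\tau)\,d\tau$, which does lie in $H^1_0$; after an integration by parts in time the fractional contribution becomes $-\ag\int_0^s (\partial_t^{\gamma} u, u)\,dt \leq 0$, and one reads off $u \equiv 0$ directly. Your Steklov-mollification route can be made to work but is noticeably more laborious, and the ``repeat the Galerkin projection on $w$'' suggestion is not really meaningful since $w$ is a given weak solution rather than something built from finite-dimensional data.
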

\begin{proof}
  We follow the standard proof for the wave equation as described, e.g., in \cite{evanspdes,LionsM}, with modifications required due to the fractional damping term. 

Let  $\{\omega_k \}_{k=1}^\infty$ be an orthogonal basis of $H_0^1(\Omega)$ and an orthonormal basis of $L^2(\Omega)$ with corresponding eigenvalues $\{\lambda_k \}_{k=1}^\infty$. We look for $u_m$ of the form
$$u_m (t) = \sum_{k=1}^m d_m^k(t) \omega_k$$
satisfying 
\begin{equation}
  \label{eq:gal_disc}
\left( \partial_t^2 u_m , \omega_k \right)+ \left( \nabla u_m , \nabla \omega_k \right) + \left(\ag \partial_t^{\gamma+1} u_m , \omega_k \right) = \left( f , \omega_k \right), 
\end{equation}
and
\begin{equation}
\label{equ:newics}
 d_m^k (0) = \left( u_0 , \omega_k \right)  \; \text{ and } \;   \frac{d}{dt}d_m^{k}(0) = \left( v_0 , \omega_k \right),
\end{equation}
for $k = 1,2,\dots,m$. As this problem is equivalent to 
\[
\partial_t^2{d_m^k} (t) + \lambda_k d_m^k (t)+\ag \partial_t^{\gamma+1} d_m^k (t) = ( f, \omega_k ),
\]
Theorem~\ref{thm:ODE} shows that a unique solution $d_m^k \in \mathcal{C}^2[0,T]$ exists for all $k \leq m$. 

Testing \eqref{eq:gal_disc} with $\partial_t u_m$, integrating in time, and using Lemma~\ref{lem:positivity_cont}\ref{lem:positivity_cont_a} we obtain 
\[
E(T; u_m) \leq E(0; u_m)+\int_0^T (f,\partial_t u_m) \,dt,
\]
where the energy is given by $E(t;v) = \frac12 \|\partial_t v\|^2+\frac12 \|\nabla v\|^2$. Applying Cauchy Schwarz inequality, the definition of the energy and the Gronwall inequality in the usual way we obtain that the energy is bounded independently of $m$
\begin{equation}
\label{equ:energybound}
E(T;u_m) \leq C \left( \| \nabla u_0 \|^2 + \| v_0 \|^2 + \int_0^T \|f \|^2 \, dt \right) 
\end{equation}
for a constant $C = C(T)$. 

In the standard way, see  \cite{evanspdes}, we obtain a bound on $\partial_t^2 u_m$
\begin{equation}
\label{equ:proof1}
\int_0^T \left\| \partial_t^2 u_m \right\|_{-1}^2\, dt \leq C \left( \| \nabla u_0 \|^2 + \int_0^T  \| f\|^2+\left\| \partial_t^{\gamma+1} u_m \right\|^2 \, dt    \right).
\end{equation}
It remains to bound the term containing the fractional derivative.

Recalling that $\gamma+1 \in (0,1)$, then using the definition of the Caputo derivative, see Definition~\ref{def:caputoderiv}, Young's convolution inequality \cite{beckner} and \eqref{equ:energybound} we deduce that 
\begin{align}\label{eq:youngs}
\int_0^T \left\| \partial_t^\gamma \partial_t u_m \right\|^2\, dt 
&\leq C \int_0^T t^{-\gamma -1} dt \int_0^T  \left\| \partial_t u_m \right\|^2 \,dt \nonumber\\
&\leq C \int_0^T \left\| \partial_t u_m \right\|^2\,dt \nonumber\\
&\leq C \left( \left\| \nabla u_0 \right\|^2 + \left\| v_0 \right\|^2 + \left\| f \right\|_{L^2(0,T; L^2(\Omega))}^2 \right).
\end{align}
 
 Returning to \eqref{equ:proof1} we obtain the bound
$$\int_0^T \left\| \partial_t^2 u_m \right\|_{-1}^2 \leq C \left(  \left\| \nabla u_0 \right\|^2 + \left\| v_0 \right\|^2 + \left\| f \right\|_{L^2(0,T; L^2(\Omega))}^2 \right).$$
Hence $u_m$ has a subsequence that converges weakly to a $u$ in the following spaces
\[
  \begin{split}    
u &\in  L^\infty \left( 0,T ; H_0^1(\Omega) \right),\;
\partial_t u \in L^\infty \left( 0,T ; L^2(\Omega) \right),\;\\
\partial_t^2 u &\in L^2 \left( 0,T ; H^{-1}(\Omega) \right),\;
 \partial_t^{\gamma+1} u \in L^2 \left( 0,T ; L^2(\Omega) \right).
  \end{split}
\]
Further, in the usual way $u$ is a weak solution of the original problem \eqref{equ:PDE}.

Finally we will show that $u$ is a unique weak solution. To do so we let $f=0$, $u_0 = 0$ and $v_0=0$ and show that the weak solution is $u \equiv 0$.
For a fixed $s > 0$, let 
\[
v(t) = \left\{ \begin{array}{ll}
\int_t^s u(\tau) \, d\tau &0\leq t \leq s\\
0 & s < t \leq T
\end{array} \right. .
\]
Testing \eqref{equ:PDE} with $v$ we obtain
\[
0 = \left( \partial_t^2 u ,v \right)+ \left( \nabla u , \nabla v \right)  + \left( \ag\partial_t^{\gamma+1} u , v \right).
\]
Then integrating with respect to time,
$$
0 = \int_0^s \left( \partial_t^2 u ,v \right)+ \left( \nabla u , \nabla v \right)  + \left( \ag\partial_t^{\gamma+1} u , v \right) \, dt = -\int_0^s \left( \partial_t u , \partial_t v \right) - \left( \nabla u , \nabla v \right)  + \left( \ag \partial_t^{\gamma } u ,\partial_t v \right) \, dt
$$
where we used Lemma~\ref{lemma:prop}\ref{prop:alphaalpha-1}. Now by the definition of $v$,
$$
 \int_0^s \frac{1}{2} \partial_t \| u \|^2\, dt  - \int_0^s \frac{1}{2} \partial_t \| \nabla v \|^2\, dt = - \int_0^s \left( \ag\partial_t^{\gamma} u , u \right) \, dt
 $$
and thus
$$
\frac{1}{2} \|u(s)\|^2 + \frac{1}{2} \| \nabla v(0) \|^2 = - \int_0^s \left( \ag \partial_t^{\gamma} u , u \right) \, dt \leq 0
$$
implying $u \equiv 0$ as $\ag>0$.
\end{proof}

We now prove the corresponding result for $\gamma \in (0,1)$.

\begin{theorem}
\label{thm:exist12}
Given $u_0 \in H^2(\Omega) \cap H^1_0(\Omega)$, $v_0 \in H_0^1 (\Omega) $, $\partial_tf \in L^2(0,T; L^2(\Omega))$ and $\gamma \in (0,1)$ a unique weak solution $u \in L^\infty(0,T;H_0^1(\Omega))$ of \eqref{eq:PDE_weak} exists and we have that $\partial_t^{\gamma+1} u \in L^2(0,T;L^2(\Omega))$, $\partial_t^2 u \in L^2(0,T;H^{-1}(\Omega))$ and $\partial_t u \in L^\infty (0,T;L^2(\Omega))$.
\end{theorem}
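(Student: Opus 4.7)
The plan is to follow the Galerkin strategy of Theorem~\ref{thm:exist01}, producing $u_m = \sum_{k=1}^m d_m^k(t)\,\omega_k$ with $d_m^k \in \mathcal{C}^2[0,T]$ guaranteed by Theorem~\ref{thm:ODE}, and to modify only the a priori estimate step. The test function $\partial_t u_m$ used there does not suffice here: the identity $\partial_t^{\gamma+1} u_m = \partial_t I_t^{1-\gamma}\partial_t u_m - v_0\,t^{-\gamma}/\Gamma(1-\gamma)$, which arises because $\partial_t u_m(0)=v_0\neq 0$, produces a term of the form $t^{-\gamma}(v_0,\partial_t u_m)$ that is not square integrable in time once $\gamma \ge 1/2$.

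The key observation I would use is that if one substitutes this identity into \eqref{eq:gal_disc}, differentiates the resulting equation in time, and exploits the analogous relation $\partial_t^2 I_t^{1-\gamma} w_m = -\gamma v_0\,t^{-\gamma-1}/\Gamma(1-\gamma) + \partial_t I_t^{1-\gamma}\partial_t w_m$ with $w_m := \partial_t u_m$, then all singular $t^{-\gamma-1}$ contributions cancel exactly and the Galerkin system collapses to the clean identity
\[
  \partial_t^2 w_m - \Delta w_m + a_\gamma\,\partial_t I_t^{1-\gamma}\partial_t w_m = \partial_t f.
\]
This formal differentiation can be made rigorous by first mollifying $f$ in time (which lifts the regularity of the Galerkin solutions enough for $\partial_t^3 u_m$ to lie in $L^2$) and passing to the limit at the end.

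Testing the displayed identity against $\partial_t w_m = \partial_t^2 u_m$, which lies in $\mathrm{span}\{\omega_1,\dots,\omega_m\}$, and integrating over $[0,T]$, the coercivity of the fractional term comes from Lemma~\ref{lem:positivity_cont}\ref{lem:positivity_cont_b} applied with $g=\partial_t w_m$ and order $1-\gamma$, giving a lower bound of the form $C\int_0^T\|\partial_t w_m\|^2\,dt$. The boundary terms are controlled by the hypotheses: $\|\nabla w_m(0)\|=\|\nabla v_0^m\|\le \|\nabla v_0\|$ because the $\omega_k$ are $H^1_0$-orthogonal, and $\partial_t w_m(0)=\partial_t^2 u_m(0)=f(0)+\Delta u_0$ (read off from \eqref{eq:gal_disc} at $t=0$, since $\partial_t^{\gamma+1}u_m(0)=0$) lies in $L^2$ because $u_0\in H^2\cap H^1_0$ and $f\in \mathcal{C}([0,T];L^2)$ (the latter follows from $\partial_t f\in L^2(0,T;L^2)$). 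Standard Young and Gronwall manipulations then deliver $m$-uniform bounds on $\partial_t^2 u_m$ in $L^\infty(0,T;L^2)\cap L^2(0,T;L^2)$ and on $\nabla\partial_t u_m$ in $L^\infty(0,T;L^2)$. Integration promotes these to $\partial_t u_m\in L^\infty(0,T;L^2)$ and $u_m\in L^\infty(0,T;H^1_0)$, while Young's convolution applied to $\partial_t^{\gamma+1}u_m = I_t^{1-\gamma}\partial_t^2 u_m$ with the $L^1(0,T)$ kernel $t^{-\gamma}/\Gamma(1-\gamma)$ yields $\partial_t^{\gamma+1}u_m\in L^2(0,T;L^2)$. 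Weak and weak-$\ast$ compactness and passage to the limit in the Galerkin equation then furnish the weak solution with the stated regularity, and $\partial_t^2 u\in L^2(0,T;H^{-1})$ is immediate from the $L^2(L^2)$ bound.

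Uniqueness is unchanged from Theorem~\ref{thm:exist01}: with $f=u_0=v_0=0$, the hypothesis $u'(0)=0$ of Lemma~\ref{lemma:prop}\ref{prop:alphaalpha-1} is satisfied, so $\partial_t^{\gamma+1}u=\partial_t\partial_t^\gamma u$; testing with $v(t)=\int_t^s u(\tau)\,d\tau$, integrating by parts in time, and using the positivity of $(\partial_t^\gamma u,u)$ (via Lemma~\ref{lem:positivity_cont}\ref{lem:positivity_cont_b} with $g=u$, noting that $u(0)=0$ gives $\partial_t^\gamma u = \partial_t I_t^{1-\gamma} u$) forces $u\equiv 0$. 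The main obstacle I anticipate is the rigorous justification of the formal differentiation step; beyond that, the bookkeeping is parallel to the case $\gamma\in(-1,0)$.
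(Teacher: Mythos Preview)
Your proposal is correct and follows essentially the same route as the paper: differentiate the Galerkin system in time, test with $\partial_t^2 u_m$, invoke Lemma~\ref{lem:positivity_cont}\ref{lem:positivity_cont_b} (with order $1-\gamma$ and $g=\partial_t^2 u_m$) for the fractional term, control the initial energy via $\|\Delta u_0 + f(0)\|$ and $\|\nabla v_0\|$, and then feed the resulting $L^2(0,T;L^2)$ bound on $\partial_t^2 u_m$ into Young's inequality for $\partial_t^{\gamma+1}u_m = I_t^{1-\gamma}\partial_t^2 u_m$. Your explicit cancellation computation showing $\partial_t\,\partial_t^{\gamma+1}u_m = \partial_t I_t^{1-\gamma}\partial_t w_m$ and your remark on mollifying $f$ are welcome clarifications of steps the paper states without detail, but the strategy is the same.
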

\begin{proof}
  The only difficulty in extending the proof of Theorem~\ref{thm:exist01} to the case of $\gamma \in (0,1)$ is the bound on $\partial_t^{ \gamma+1}$ in \eqref{eq:youngs}. To circumvent this problem we note that due to the additional smoothness assumption on $f$, the solution $u_m \in \mathcal{C}^2[0,T]$ of \eqref{eq:gal_disc} satisfies the time differentiated equation
\[
 \partial_t^3 u_m -\Delta \partial_tu_m +\ag \partial_t \partial_t^{\gamma+1} u_m = \partial_tf .
\]
Now testing with $\partial_t^2 u_m$ and using Lemma~\ref{lem:positivity_cont}(\ref{lem:positivity_cont_b}) we obtain the energy bound
\[
E(t;\partial_tu_m) \leq C \left(E(0;\partial_tu_m)  + \int_0^t \|\partial_tf \|^2 \, dt \right).
\]
As
\[
  \begin{split}    
E(0;\partial_t u_m) &= \frac12 \|\partial_t^2 u(0)\|^2+ \frac12 \|\nabla \partial_t u(0)\|^2\\
&= \frac12 \|\Delta u_0+f(0)\|^2 + \frac12 \|\nabla v_0\|^2
  \end{split}
\]
we have obtained a bound on $\partial_t^2u$ and consequently using again Young's inequality
\[
\int_0^t \|\partial_t^{\gamma+1}u\|^2dt \leq C\int_0^t \|\partial_t^2u\|^2dt
\leq  C\left(\|\Delta u_0\|^2+\|f(0)\|^2 +\|\nabla v_0\|^2  + \int_0^t \|\partial_tf \|^2 \, dt\right).
\]
Using this bound in the proof of Theorem~\ref{thm:exist01} we obtain the result.

\end{proof}

\begin{remark}
  The fractional Zener wave equation investigated in \cite{EndreZener} is of a similar form to the Szabo equation with $\gamma \in (0,1)$; see equation (2.5) in \cite{EndreZener}. With similar approach to ours, the authors prove uniqueness and existence of the Zener model. The solution in \cite{EndreZener} is understood in a weaker sense, namely lower regularity of the data is required but bounds on the second derivative in time of the solution are not given.
\end{remark}

\begin{remark}\label{rem:smoothness_u}
  As is usual in PDEs with time fractional derivatives, we expect a singularity at $t = 0$ even for smooth data. Namely, due to Theorem~\ref{thm:ODE}  we expect for smooth $f$ and $\gamma \in (0,1)$
\begin{equation}
\label{eq:nonsmooth_general_gampos}
u(x,t)  = u_0(x)+tv_0(x)+t^2w_0(x)+t^{3-\gamma} z_0(x)+o(t^{3-\gamma}).
\end{equation}

The right hand side is then
\[
  \begin{split}
    f(t) &= 
\partial_t^2u -\Delta u+\ag \partial_t^{\gamma+1} u
\\&= 2w_0+ (3-\gamma)(2-\gamma)t^{1-\gamma} z_0-\Delta u_0 + \frac{2\ag}{\Gamma(2-\gamma)} t^{1-\gamma}w_0+ o(t^{1-\gamma}) 
  \end{split}
\]
If $f$ is to be smooth, we need to match the term $t^{1-\gamma}$ by setting  $z_0 = -\frac{2\ag}{\Gamma (4-\gamma)}w_0$.

On the other hand if $\gamma \in (-1,0)$ we have
\[
u(x,t)  = u_0(x)+tv_0(x)+t^2w_0(x)+t^{2-\gamma}z_0(x)+o(t^{2-\gamma}).
\]
The right hand side is then
\[
    f(t) = 2w_0+(2-\gamma)(1-\gamma)t^{-\gamma} z_0-\Delta u_0 + \frac{\ag}{\Gamma (1-\gamma)} t^{-\gamma}v_0+o(t^{-\gamma~}).
\]
If $f$ is to be smooth, we need to match the term $t^{-\gamma}$ by setting  $z_0 = -\frac{\ag}{\Gamma (3-\gamma)}v_0$.


Alternatively, for $u$ to be smooth we would need $f$ to have a singularity of the type $t^{1-\gamma}$ for $\gamma \in (0,1)$ and $t^{-\gamma}$ for $\gamma \in (-1,0)$ with further weaker singularities at $t = 0$.
\end{remark}
\section{Fully discrete system}
\label{sec:fullydiscretesystem}
To obtain the fully discrete system we will use a finite element method in space and a combination of leapfrog and BDF2 based convolution quadrature discretization in time. The motivation for using leapfrog is to obtain an explicit scheme, whereas the main motivation for using convolution quadrature are its excellent stability properties \cite{Lubich1988I,Lubich1986} and  the ability to evaluate it very efficiently \cite{banjai2019,Lubich2006}. An alternative discretization of the fractional time derivative is the L1 scheme \cite{SunWu,LinXu}. This scheme also has the required stability property, i.e., it preserves the positivity property of the fractional derivative; see \cite[Lemma~3.1]{SunWu}. However, to the best of our knowledge correction terms for the L1 scheme are not available except for specific equations such as subdiffusion; see \cite{Yan2018}. 

\subsection{Spatial semidiscretization}
To discretize in space, we make use of a piecewise linear Galerkin finite element method. Namely let $V^h$  be a family of finite dimensional subspaces of $H^1_0(\Omega)$ parametrized by the meshwidth $h > 0$. We assume that these spaces satisfy the following approximation property 
\[
\inf_{v^h \in V^h}\|v-v^h\|_{1} \leq C h \|v\|_{2}
\]
and 
\[
\inf_{v^h \in V^h}\|v-v^h\| \leq C h^2 \|v\|_{2}
\]
for some constant $C> 0$ independent of $h$. Furthermore, we assume that an inverse inequality holds 
\begin{equation}
  \label{eq:inv_in}
  \sup_{v \in V^h} \|\nabla v\| \leq \Cinv h^{-1}\|v\|,
\end{equation}
for some constant $\Cinv$.

The semidiscrete problem then  reads: Find $u^h(t) \in V^h$ such that
\[
  (\partial_t^2u^h,v)+(\nabla u^h, \nabla v)+(\ag\partial_t^{\gamma+1}u^h,v) = (f,v), \qquad
\text{for all } v \in V^h.
\]
We denote projections of  the initial data $u_0$ and $v_0$ onto $V^h$ by $\u_0$ and $\vh_0$ respectively; the projections will be specified later on.
\subsection{Time discretization}

In time we discretize using the explicit, leapfrog scheme 
\[
  \tfrac1{\tstep^2}(\u_{n+1}-2\u_n+\u_{n-1},v)+(\nabla \u_n, \nabla v)+(\ag \w_n,v) = (f(t_n),v),
\]
where $t_n = n\tstep$ for some fixed time step $\tstep >0$, $\u_n \in V^h$ is an approximation of $u(t_n)$ and $w_n$ is an approximation of $\partial_t^{\gamma+1}u(t_n)$ which we describe next. 

First of all we define the central difference operator
\[
 \sder \u(t_n) := \left\{ \begin{array}{ll}
\vh_0 & n=0\vspace{.2cm}\\
\frac{1}{2\tstep}(\u_{n+1}-\u_{n-1}) &  n\geq 1
\end{array} \right. .
\]
We extend the definition also to continuous functions $g \in C[0,T]$ with a given time derivative at $t = 0$:
\[
\sder g(t) := \left\{ \begin{array}{ll}
\partial_t g(0) & t=0\vspace{.2cm}\\
\frac{1}{2\tstep}(g(t+\tstep)-g(t-\tstep)) &  t \geq \tstep\vspace{.2cm}\\
\frac1{\tstep}(\sder g(\tstep)-\partial_tg(0))t+\partial_tg(0) &  t \in [0,\tstep]
\end{array}\right.,
\]
i.e., $\sder g(t)$ is exact at $t = 0$, the central difference quotient for $t \geq \tstep$ and is the linear interpolant of $\sder g$ for $t \in [0,\tstep]$.

It remains to discretize the fractional derivative $\partial_t^\gamma$. The formula should be computable efficiently and should retain the positivity property of the fractional derivative as described in Lemma~\ref{lem:positivity_cont}. All this is satisfied by convolution quadrature introduced by \cite{Lubich1986}, which we describe next.

Convolution quadrature (CQ) is based on an $A(\theta)$-stable linear multistep method. For a continuous function $g$, the CQ formula for $\partial_t^\gamma$ is given by
\begin{equation}
  \label{eq:cq_conv}
  \tder{\gamma} g(t_n) := \sum_{j = 0}^n \omega_{n-j} (g(t_j)-\chi_\gamma g(0)) = \sum_{j = 0}^n \omega_j (g(t_n-t_j)-\chi_\gamma g(0)),  
\end{equation}
where
\[
\chi_\gamma = \left\{
  \begin{array}{cc}
    0& \gamma \in (-1,0]\\
    1 & \gamma \in (0,1)
  \end{array}
\right.
\]
and $\omega_j$ are convolution weights defined below. The term $\chi_\gamma \omega_n g(0)$ above  is added to the standard definition of convolution quadrature in order to correct for the fact that we are using CQ to compute Caputo  rather than  Riemann-Liouville fractional derivatives. 

Next, we define the convolution weights. As central differences are approximations of order 2, we restrict our discussion to second order, BDF2 based CQ.  The corresponding convolution weights are then given by
\[
\left(\frac{\delta(\zeta)}{\tstep}\right)^\gamma =  \sum_{j = 0}^\infty \omega_j \zeta^j, \quad \delta(\zeta) = \frac32-2\zeta+\frac12\zeta^2.
\]

We can also define the approximation at intermediate values of $t$ by
\begin{equation}
  \label{eq:CQ_for_all_t}  
    \tder{\gamma} g(t) := \sum_{j \geq 0; t_j \leq t} \omega_j (g(t-t_j)-\chi_\gamma g(0)) = \sum_{j = 0}^\infty \omega_j (g(t-t_j)-\chi_\gamma g(0)),
\end{equation}
where we define $g(t) \equiv 0$ for $t < 0$.  Further, it is clear that definition \eqref{eq:cq_conv} extends to sequences by
\[
\tder{\gamma} g(t_n) := \sum_{j = 0}^n \omega_{n-j} (g_j-\chi_\gamma g_0) = \sum_{j = 0}^n \omega_j (g_{n-j}-\chi_\gamma g_0).  
\]
From \cite[Theorem~2.1]{Lubich2004} we have the useful estimate
\begin{equation}
  \label{eq:weights_approx}
  |\omega_n-\tstep t_n^{-\mu-1}| \leq Ct_n^{-\mu-1-p}\tstep^{p+1}, \quad p = 0,1,2,
\end{equation}
for $t \in (0,T]$.

As mentioned above, most of the results in the literature analyse  CQ as an approximation to Riemann-Liouville derivatives \cite{Lubich1986}.
 
However, as the Caputo and Riemann-Liouville derivatives of order $\gamma$ are equivalent for $\gamma \in (-1,0)$ and for  $\gamma \in (0,1)$ are equivalent if $g(0) = 0$, we can deduce from \cite[Theorem~2.2]{Lubich2004} the following result.

\begin{lemma}\label{lem:CQ_pol}
  Let $\gamma \in (-1,1)$, $\tstep \in (0,\bar \tstep)$ be the time step for some sufficiently small $\bar \tstep>0$, and $g(t) = t^\beta$ for $\beta \in \mathbb{R}$. Then for BDF2 based CQ it holds
\begin{equation}
\label{eq:CQ_pol}
\left|\tder{\gamma} g(t) - \partial_t^\gamma g (t) \right| \leq C\left\{ 
\begin{array}{ll}
0 & \beta = 0, \gamma \in (0,1)\\
t^{-\gamma -1} \tstep, & \beta = 0, \gamma \in (-1,0)\\
 t^{-\gamma + \beta - p} \tstep^p, &  \beta \geq  1,
\end{array}
\right.
\end{equation}
and $p = 1,2$.
\end{lemma}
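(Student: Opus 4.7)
The plan is to reduce the statement to Theorem~2.2 of \cite{Lubich2004}, which provides sharp convergence estimates for BDF2-based CQ applied to monomials in the Riemann--Liouville setting. The correction term $\chi_\gamma g(0)$ in the CQ definition \eqref{eq:CQ_for_all_t} is precisely what converts the standard Riemann--Liouville CQ into an approximation of the Caputo derivative, so once this is accounted for the proof reduces to applying Lubich's estimate to $g(t)=t^\beta$, split by the sign of $\gamma$.

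For $\gamma \in (-1,0)$ the Caputo and Riemann--Liouville fractional derivatives of order $\gamma$ agree (both equal $I_t^{-\gamma}$) and $\chi_\gamma = 0$, so $\tder{\gamma} g$ is exactly the standard BDF2-CQ approximation of $\partial_t^\gamma g$ in the Riemann--Liouville sense. For $g(t)=t^\beta$ with $\beta \geq 1$, Lubich's theorem applied to this monomial directly yields the bound $Ct^{\beta-\gamma-p}\tstep^p$ for $p=1,2$. For $\beta=0$ the same theorem applied to the constant function $g\equiv 1$ produces only the leading first-order correction of order $t^{-\gamma-1}\tstep$, which corresponds to the first term in the expansion of $\omega_j-\tstep t_j^{-\gamma-1}/\Gamma(-\gamma)$ entering \eqref{eq:weights_approx}; this is precisely the stated bound.

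For $\gamma \in (0,1)$ I would exploit that the Caputo derivative of $g$ equals the Riemann--Liouville derivative of $g-g(0)$. Since $\chi_\gamma = 1$, the formula \eqref{eq:CQ_for_all_t} is exactly the Riemann--Liouville BDF2-CQ applied to $g-g(0)$, so the known RL estimates transfer one-to-one. The case $\beta=0$ is trivial: both $\tder{\gamma}(1)$ and $\partial_t^\gamma 1$ vanish identically by \eqref{equ:caputo_poly}, giving error zero. For $\beta \geq 1$ we have $g(0)=0$, the correction is inactive, and \cite[Theorem~2.2]{Lubich2004} again yields $Ct^{\beta-\gamma-p}\tstep^p$.

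The main point to verify, and what I would expect to be the only minor obstacle, is that the hypotheses of Lubich's theorem hold uniformly in each regime: that $\beta-\gamma$ avoids the exceptional non-positive integer exponents appearing in the expansion, that the BDF2 generating function $\delta(\zeta)$ satisfies the required $A(\theta)$-stability and sector condition on a disc of radius accounting for $\tstep \leq \bar\tstep$, and that the implicit constant does not blow up as $\gamma$ approaches the endpoints of $(-1,1)\setminus\{0\}$. Under the standing assumptions $\beta\geq 1$ (or $\beta=0$ handled separately), $\gamma\in(-1,1)\setminus\{0\}$, and $\tstep\leq\bar\tstep$, all of these are routine, and the lemma follows.
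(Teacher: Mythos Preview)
Your proposal is correct and takes essentially the same approach as the paper: reduce to \cite[Theorem~2.2]{Lubich2004} after observing that the $\chi_\gamma$ correction in \eqref{eq:CQ_for_all_t} aligns the CQ with the Caputo derivative (trivially for $\gamma<0$, and via $g(0)=0$ for $\beta\geq 1$ when $\gamma>0$), with the $\beta=0$, $\gamma\in(0,1)$ case handled by noting both sides vanish identically. The paper's proof is in fact even more terse than yours, simply citing Lubich's theorem for all nontrivial cases after dispatching the zero case.
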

\begin{proof}
Note that for $\beta = 0$ and $\gamma \in (0,1)$ we have $\tder{\gamma} g \equiv \partial_t^{\gamma} g \equiv 0$.  The remaining cases follow directly from \cite[Theorem~2.2]{Lubich2004}.
\end{proof}

Next we add correction terms that integrate lower order terms exactly and do not destroy the convergence for the higher order terms.  For $\gamma \in (-1,0)$ it is sufficient to correct for constant functions, whereas for $\gamma \in (0,1)$ we will need to correct for linears as well.

Correction terms  were introduced by Lubich in \cite{Lubich1986} and are of the form
\begin{equation}
\label{equ:correction0_gampos}
\ctder{\gamma} g(t) = \sum_{j \geq 0; t_j \leq t} \omega_{j} g(t-t_j) +   w_{0}(t) g(0)+w_1(t)g(t_1).
\end{equation}
Here, the correction terms $w_{j}$ are chosen so that
\[
 w_0(t) =  \left\{  \begin{array}{ll}
\frac{t^{-\gamma}}{\Gamma (1-\gamma)} - \sum\limits_{j \geq 0; t_j \leq t} \omega_{j}, & \text{for } \gamma \in (-1,0)\vspace{.2cm}\\
 - \sum\limits_{j \geq 0; t_j \leq t} \omega_j - w_1(t), & \text{for } \gamma \in (0,1)
\end{array}
\right.
\]
\[
 w_1(t) = \left\{
   \begin{array}{ll}
0 & \text{for } \gamma \in (-1,0)\vspace{.2cm}\\
\tstep^{-1}\left(\frac{t^{1-\gamma} }{\Gamma (2-\gamma)} - \sum\limits_{j \geq 0; t_j \leq t}  (t-t_j)\omega_j\right),      & \text{for } \gamma \in (0,1).
   \end{array}
\right.
\]
Furthermore, denote $w_{nj} : = w_j(t_n)$ and hence
\[
\ctder{\gamma} g(t_n) = \sum_{j=0}^n \omega_{n-j} g(t_j) +   w_{n0} g(0) +  w_{n1} g(t_1).
\]
Note that by definition $\tder{\gamma}$ already contains the correction for constants if $\gamma \in (0,1)$. In that case the above just adds the correction for linear functions.

\begin{lemma}\label{lem:CQ_pol_corr}
  Under the conditions of Lemma~\ref{lem:CQ_pol}  it holds that for $g(t) = t^\beta$
\begin{equation}
\label{eq:CQ_pol_corr}
\left|\ctder{\gamma} g(t) - \partial_t^\gamma g (t) \right| \leq C\left\{ 
\begin{array}{ll}
0 & \beta = 0,\\
0 & \beta = 1, \gamma \in (0,1)\\
t^{-\gamma+\beta-2}\tstep^2 & \beta \geq 1, \gamma \in (-1,0)\\
t^{-\gamma+\beta-2}\tstep^2+t^{-\gamma-1}\tstep^{\beta+1} & \beta > 1, \gamma \in (0,1).
\end{array}
\right.
\end{equation}
with $C > 0$ independent of $\tstep \in (0,\bar \tstep)$.
\end{lemma}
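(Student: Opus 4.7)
The plan is to reduce the estimate for $\ctder{\gamma}$ to the uncorrected bound of Lemma~\ref{lem:CQ_pol}, exploiting that the correction terms $w_0(t)g(0)+w_1(t)g(t_1)$ largely vanish on $g(t)=t^\beta$ (since $g(0)=0$ as soon as $\beta\geq 1$) and that whatever remains can be re-expressed through the uncorrected CQ error on the linear $g(t)=t$. The two exact cases are immediate from the construction. For $\beta=0$ and $\gamma\in(-1,0)$ the defining identity $w_0(t)=t^{-\gamma}/\Gamma(1-\gamma)-\sum_j \omega_j$ is precisely the statement $\ctder{\gamma}1 = t^{-\gamma}/\Gamma(1-\gamma) = \partial_t^\gamma 1$. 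For $\beta=0$ and $\gamma\in(0,1)$ the telescoping $\sum_j\omega_j + w_0(t)+w_1(t) = 0$ built into the definition of $w_0$ yields $\ctder{\gamma}1 = 0 = \partial_t^\gamma 1$. The case $\beta=1$, $\gamma\in(0,1)$ is simply the defining equation for $w_1(t)$.

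For $\gamma\in(-1,0)$ and $\beta\geq 1$, we have $g(0)=0^\beta=0$ and $w_1\equiv 0$ on this range of $\gamma$, so the entire correction drops out and $\ctder{\gamma}t^\beta=\tder{\gamma}t^\beta$; the claimed bound $Ct^{-\gamma+\beta-2}\tstep^2$ is then Lemma~\ref{lem:CQ_pol} with $p=2$.

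The only genuinely nontrivial case is $\gamma\in(0,1)$ with $\beta>1$. Here $g(0)=0$ still kills the $w_0$ correction but $g(t_1)=\tstep^\beta\neq 0$, giving
\[
\ctder{\gamma}t^\beta - \partial_t^\gamma t^\beta = \bigl(\tder{\gamma}t^\beta - \partial_t^\gamma t^\beta\bigr) + w_{n1}\tstep^\beta.
\]
The trick that sidesteps the explicit formula for $w_1$ is that the already-proved exactness at $\beta=1$ reads $w_{n1}\tstep + (\tder{\gamma}t - \partial_t^\gamma t)|_{t_n} = 0$, eliminating $w_{n1}$ in favour of the uncorrected error on the linear function:
\[
\ctder{\gamma}t^\beta - \partial_t^\gamma t^\beta = \bigl(\tder{\gamma}t^\beta - \partial_t^\gamma t^\beta\bigr) - \tstep^{\beta-1}\bigl(\tder{\gamma}t - \partial_t^\gamma t\bigr)|_{t_n}.
\]
Lemma~\ref{lem:CQ_pol} with $p=2$ then bounds the first term by $Ct^{-\gamma+\beta-2}\tstep^2$ and the second (applied at $\beta=1$) by $Ct^{-\gamma-1}\tstep^2$; multiplied by $\tstep^{\beta-1}$ the latter contributes $Ct^{-\gamma-1}\tstep^{\beta+1}$, yielding the stated bound. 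The main obstacle is less analytic than bookkeeping: one must keep careful track of which correction acts in which regime of $\gamma$ and notice that for $\beta\geq 1$ the correction $\ctder{\gamma}-\tder{\gamma}$ collapses to at most a single scalar multiple of $\tstep^\beta$, whose coefficient can be read off from the $\beta=1$ exactness condition instead of from the explicit formula for $w_1$.
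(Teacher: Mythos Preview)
Your proof is correct and follows essentially the same route as the paper: for $\gamma<0$ the correction drops out when $\beta\geq 1$, and for $\gamma>0$ the extra term $w_{n1}\tstep^\beta$ is bounded by reading off $|w_{n1}|\leq Ct^{-\gamma-1}\tstep$ from Lemma~\ref{lem:CQ_pol} applied to the linear function. Your phrasing via the exactness identity $w_{n1}\tstep = -(\tder{\gamma}t-\partial_t^\gamma t)$ is simply the definition of $w_1$ rewritten, so the arguments coincide.
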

\begin{proof}
For $\gamma < 0$, the proof follows directly from Lemma~\ref{lem:CQ_pol} and definition of $\ctder{}$. For $\gamma > 0$, Lemma~\ref{lem:CQ_pol} and the definition of $w_1(t)$ imply that $w_{1}(t) \leq  Ct^{-\gamma-1}\tstep$ and hence $w_{n1}g(t_1)  = t^{-\gamma-1}O(\tstep^{1+\beta})$ for $g(t) = t^\beta$ and $\beta > 1$. This in turn implies the final missing result.
\end{proof}

Using the above shorthand for the CQ approximation, we can write the fully discretized systems as
\begin{equation}
  \label{eq:fully_discrete}
    \tfrac1{\tstep^2}(\u_{n+1}-2\u_n+\u_{n-1},v)+(\nabla \u_n, \nabla v)+(\ag\tder{\gamma} \sder \u(t_n),v) = (f(t_n),v)
  \end{equation}
or when including the correction
\begin{equation}
  \label{eq:fully_discrete_corr}
    \tfrac1{\tstep^2}(\u_{n+1}-2\u_n+\u_{n-1},v)+(\nabla \u_n, \nabla v)+(\ag\ctder{\gamma} \sder \u(t_n),v) = (f(t_n),v),
  \end{equation}
$n = 1, \dots, N-1$. The coupling of the two time discretizations is similar to the FEM-BEM coupling in \cite{banjai15}. As both of the above schemes are explicit,  we will see during the course of the analysis that the following CFL condition is required
\begin{equation}
  \label{eq:CFL}
  \tstep \leq \frac{\sqrt{2}h}{\Cinv}.
\end{equation}

It remains to describe the choice of initial data $\u_0, \u_1$. To do this we require  the Ritz projection, denoted by $R_h : H^1_0(\Omega) \rightarrow V^h$ and defined by 
\[
(\nabla R_h u, \nabla v) = (\nabla u, \nabla v) \qquad \text{for all } v \in V^h
\]
and the $L^2$ projection is denoted by $P_h: L^2(\Omega) \rightarrow V^h$
\[
(P_h u,  v) = (u,v) \qquad \text{for all } v \in V^h.
\]
We have the approximation  property,  see, e.g.,  \cite{larssonbook}, 
\begin{equation}
\label{equ:larssonL2}
\|
R_h u - u \| \leq C h^s \| u\|_s \quad \text{ for } s \in [1,2] \text{ and } \forall u \in H^s(\Omega) \cap H_0^1(\Omega).
\end{equation}
We then define the initial data by
\begin{equation}
  \label{eq:initial_data}  
\u_0 = R_hu_0 \quad\text{and}\quad \u_1 = R_h\left(u_0+\tstep v_0\right)+\frac12\tstep^2 P_h\partial_t^2u(0).
\end{equation}
Using the PDE and the fact that $\partial_t^2 u$ is continuous we see that $P_h \partial_t^2 u(0) \in V^h$ is the solution of
\[
(P_h\partial_t^2u(0), v) = (f(0),v)-(\nabla u_0,\nabla v) \quad \text{for all } v \in V^h.
\]
Note also that by definition 
\[
\sder \u(0) = R_h \partial_tu(0) = R_hv_0.
\]
 
The reason for using the mixed approximation $\tder{\gamma}\sder$ instead of a fully CQ approximation $\tder{\gamma+1}$ is to conserve the sign of the damping term.

\begin{lemma}\label{lem:herglotz}
  Given a sequence $v_0, \dots, v_N \in L^2(\Omega)$, we have
  \begin{enumerate}[(a)]
  \item For $\gamma \in (-1,0)$
\[
\sum_{n = 0}^N(\tder{\gamma}v(t_n), v_n) \geq 0
\]
and
\[
\sum_{n = 0}^N(\ctder{\gamma}v(t_n), v_n) \geq -\sum_{n = 0}^N \omega_{n0}(v_0,v_n).
\]
\item For $\gamma \in (0,1)$
\[
\sum_{n = 0}^N(\tder{\gamma}v(t_n), v_n) \geq -\sum_{n = 0}^N \omega_n(v_0,v_n)
\]
and
\[
\sum_{n = 0}^N(\ctder{\gamma}v(t_n), v_n) \geq -\sum_{n = 0}^N \omega_{n0}(v_0,v_n)-\sum_{n = 0}^N \omega_{n1}(v_1,v_n).
\]
  \end{enumerate}
\end{lemma}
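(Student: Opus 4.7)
The plan is to reduce everything to the classical Herglotz-type positivity that Lubich used to analyse CQ (see \cite{Lubich1988I}). Write the generating function
\[
K(\zeta) = \bigl(\delta(\zeta)/\tstep\bigr)^\gamma = \sum_{j \geq 0} \omega_j \zeta^j.
\]
Since $\delta(\zeta) = \tfrac32 - 2\zeta + \tfrac12\zeta^2$ is the BDF2 symbol and BDF2 is A-stable, $\delta$ maps the closed unit disc into the closed right half plane. Consequently $\delta(\zeta)^\gamma$ lies in a sector of half-opening angle $|\gamma|\cdot\pi/2 < \pi/2$, so that $\Re K(\zeta) \geq 0$ for $|\zeta| \leq 1$. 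The discrete Herglotz lemma (obtained by writing $\omega_j$ as a Cauchy integral of $K$ on $|\zeta| = r < 1$, rearranging, and letting $r \uparrow 1$) then yields, for any $L^2(\Omega)$-valued sequence $\{v_n\}$,
\[
\sum_{n = 0}^N \Bigl(\sum_{j = 0}^n \omega_{n-j}\, v_j,\; v_n \Bigr) \;\geq\; 0.
\]

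For part (a), $\gamma \in (-1,0)$, we have $\chi_\gamma = 0$ and $w_1(t) \equiv 0$, so $\tder{\gamma} v(t_n) = \sum_{j=0}^n \omega_{n-j} v_j$ and the first inequality is immediate from the Herglotz lemma. For the corrected version, $\ctder{\gamma} v(t_n) = \tder{\gamma} v(t_n) + w_{n0} v_0$, whence
\[
\sum_{n=0}^N (\ctder{\gamma} v(t_n), v_n) \geq \sum_{n=0}^N w_{n0}(v_0, v_n),
\]
which after renaming gives the stated bound.

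For part (b), $\gamma \in (0,1)$, the correction $\chi_\gamma = 1$ is already built into $\tder{\gamma}$, so $\tder{\gamma} v(t_n) = \sum_{j=0}^n \omega_{n-j}(v_j - v_0)$. Split
\[
\sum_{n=0}^N (\tder{\gamma} v(t_n), v_n) = \sum_{n=0}^N \Bigl(\sum_{j=0}^n \omega_{n-j} v_j,\; v_n\Bigr) - \sum_{n=0}^N \Bigl(\sum_{j=0}^n \omega_{n-j}\Bigr)(v_0, v_n),
\]
apply Herglotz positivity to the first sum, and reinterpret the second as the subtraction written in the lemma. For $\ctder{\gamma}$ we additionally absorb the two extra correction contributions $w_{n0} v_0$ and $w_{n1} v_1$ into the right-hand side in the same way.

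The only nontrivial point is the verification that $K$ has non-negative real part up to $|\zeta| = 1$; once that is in hand, everything reduces to bookkeeping with the definitions of $\tder{\gamma}$, $\ctder{\gamma}$ and the correction weights $w_{n0}, w_{n1}$. The A-stability of BDF2 together with $|\gamma| < 1$ is precisely what makes that step succeed and is also the reason the analysis would fail for $|\gamma| \geq 1$ or for a non-A-stable multistep method.
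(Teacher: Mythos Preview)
Your approach is essentially the paper's: its entire proof is the one-line observation that $\Re(s^\gamma v,v)\geq 0$ in the right half-plane together with the Herglotz theorem of \cite[Theorem~2.3]{banjai15}, which is precisely the discrete positivity you derive by combining A-stability of BDF2 with the sector argument for $(\delta(\zeta)/\tstep)^\gamma$. The remaining inequalities are obtained, in both cases, by peeling the correction terms $w_{n0}v_0$, $w_{n1}v_1$ (and for $\gamma>0$ the built-in $\chi_\gamma$-shift) off the raw convolution sum, exactly as you do.
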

\begin{proof}
The proof follows directly from the frequency domain estimate
\[
\Re (s^{\gamma} v,v) \geq 0
\]
for $v \in L^2(\Omega)$ and the Herglotz theorem \cite[Theorem~2.3]{banjai15}.
\end{proof}

\section{Convergence analysis}
\label{sec:convergenceanalysis}
We start with analyzing the error of the mixed approximation $\tder{\gamma}\sder$ when applied to $t^{\beta}$. We will first need a simple technical lemma.

\begin{lemma}\label{lem:sum}
  Let $t_n = n\tstep$ with $\tstep > 0$ and $t_N = T$. Given  $\eta \neq -1$ there exists a constant $C >0$ independent of $T$ and $\tstep$ such that
\[
\tstep \sum_{n = 1}^N t_n^\eta \leq C \max(T^{\eta+1},\tstep^{\eta+1}).
\]
\end{lemma}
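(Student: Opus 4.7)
The statement is a standard Riemann-sum-versus-integral comparison, and the only real care needed is in tracking which of $T^{\eta+1}$ and $\tstep^{\eta+1}$ dominates for each regime of $\eta$. I would split the argument according to the sign of $\eta$ and the monotonicity of $x \mapsto x^\eta$ on $(0,\infty)$, comparing the sum to $\int_0^T x^\eta\,dx$ (or to a shifted version thereof).

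First, for $\eta > 0$ the map $x \mapsto x^\eta$ is increasing, so $\tstep\, t_n^\eta \leq \int_{t_n}^{t_{n+1}} x^\eta\,dx$, and summing gives
\[
\tstep \sum_{n=1}^N t_n^\eta \le \int_{\tstep}^{T+\tstep} x^\eta \,dx \le \frac{(T+\tstep)^{\eta+1}}{\eta+1}.
\]
Since in the regime of interest we may assume $\tstep \le T$ (otherwise the sum has at most one term and the bound $\tstep \cdot T^\eta \le \tstep^{\eta+1}$ is immediate from $T\le \tstep$), this gives a bound of order $T^{\eta+1}$, which equals $\max(T^{\eta+1},\tstep^{\eta+1})$ in this regime.

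Second, for $\eta \in (-1,0)$ the function is decreasing, so $\tstep\, t_n^\eta \le \int_{t_{n-1}}^{t_n} x^\eta\,dx$ for $n\ge 2$, giving
\[
\tstep \sum_{n=1}^N t_n^\eta \le \tstep \cdot t_1^\eta + \int_{\tstep}^{T} x^\eta \,dx = \tstep^{\eta+1} + \frac{T^{\eta+1}-\tstep^{\eta+1}}{\eta+1}.
\]
Since $\eta+1 > 0$, both summands are bounded by $C T^{\eta+1}$ once $\tstep \le T$, which again coincides with $\max(T^{\eta+1},\tstep^{\eta+1})$. Finally, for $\eta < -1$ the same decreasing argument applies, but now $\eta+1 < 0$, so $T^{\eta+1} < \tstep^{\eta+1}$; the dominant term on the right is $\tstep^{\eta+1}$, matching $\max(T^{\eta+1},\tstep^{\eta+1})$.

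There is no real obstacle here; the only minor pitfall is the boundary term $\tstep \cdot t_1^\eta = \tstep^{\eta+1}$ when $\eta<0$, which must be pulled out separately because the integral bound from below by $\int_0^\tstep x^\eta\,dx$ is not available with a finite constant (and would blow up for $\eta\le -1$). The exclusion of $\eta=-1$ is exactly what rules out the logarithmic case in which neither $T^{\eta+1}$ nor $\tstep^{\eta+1}$ captures the right growth.
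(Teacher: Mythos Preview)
Your proof is correct and follows essentially the same approach as the paper: compare the sum to the integral $\int x^\eta\,dx$ using monotonicity of $x^\eta$, and separate out the first term $\tstep^{\eta+1}$ when $\eta<0$. The paper organizes the argument into two cases ($\eta\le 0$ and $\eta\ge 0$) rather than your three, but the computations are otherwise identical; note also that $t_N=T$ with $N\ge 1$ already forces $\tstep\le T$, so the aside about the single-term case is superfluous.
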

\begin{proof}
Consider first the case $\eta \leq 0$. Then $t^\eta$ is a decreasing function  and
\[
    \tstep \sum_{n=1}^N t_n^\eta \leq  \tstep^{\eta+1}+\int_{\tstep}^T t^\eta dt 
    = \frac{1}{\eta+1}\left(T^{\eta+1}+\eta \tstep^{\eta+1}\right).
\]
If $\eta \geq 0$, then $t^\eta$ is an increasing function and we have instead
\[
    \tstep \sum_{n=1}^N t_n^\eta \leq  \int_{\tstep}^{T+\tstep} t^\eta dt
    = \frac1{\eta+1}\left((T+\tstep)^{\eta+1}-\tstep^{\eta+1}\right).
    \]
    Hence in both cases the sum is bounded by $ C \max(T^{\eta+1},\tstep^{\eta+1})$ with the constant depending on $\eta$.
\end{proof}

\begin{lemma}\label{lem:mixed_poly}
  For $g(t) = t^\beta$ and $\beta =0,1$ or $\beta \in [2,3]$ we have
\[
\left|  \tder{\gamma} \sder g(t)  - \partial_t^{\gamma+1} g(t) \right| \leq  C
  \left\{
  \begin{array}{ll}
0 & \beta = 0,\\
0 & \beta = 1, \gamma \in (0,1)\\
    t^{-\gamma-1}\tstep & \beta = 1, \gamma \in (-1,0)\\
t^{-\gamma+\beta-3}\tstep^2 & \beta\in \{2,3\}\\
t^{-\gamma+\beta-3}\tstep^2 +t^{\beta-3}\tstep^{\min(2-\gamma,2)}& \beta \in (2,3),
  \end{array}
\right.
\] 
and constant $C> 0$ independent of $\tstep \in (0,\bar \tstep)$ for some small enough $\bar \tstep > 0$.
\end{lemma}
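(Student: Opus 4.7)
The plan is to use the identity $\partial_t^{\gamma+1}g = \partial_t^\gamma g'$ from Lemma~\ref{lemma:prop}(\ref{prop:alpha-1}) and split the error into a ``central-difference part'' and a ``CQ part'':
\[
\tder{\gamma}\sder g(t) - \partial_t^{\gamma+1}g(t) = \tder{\gamma}\bigl(\sder g - g'\bigr)(t) + \bigl(\tder{\gamma} - \partial_t^\gamma\bigr)g'(t).
\]
The CQ part is handled directly by Lemma~\ref{lem:CQ_pol} applied to $g'(t) = \beta t^{\beta-1}$. For $\beta = 0, 1$ the central difference is exact, $\sder g \equiv g'$, so the first summand vanishes and Lemma~\ref{lem:CQ_pol} yields the stated bounds. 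For $\beta \in [2, 3]$ the exponent $\beta - 1 \in [1, 2]$ allows the choice $p = 2$ and produces the first contribution $Ct^{-\gamma+\beta-3}\tstep^2$.

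Next, for $\beta \in [2, 3]$ I would compute $\phi := \sder g - g'$ explicitly. On $[0, \tstep]$ the linear-interpolant definition of $\sder$ yields $\phi(t) = (2^{\beta-1}\tstep^{\beta-2} - \beta t^{\beta-2})t$, hence $\phi(0) = 0$ and $|\phi(t)| \leq C\tstep^{\beta-1}$, with $\phi \equiv 0$ when $\beta = 2$. On $[\tstep, T]$ the binomial expansion of $(t\pm\tstep)^\beta$ about $t$ (with odd-in-$\tstep$ terms cancelling by stencil symmetry) gives
\[
\phi(t) = \binom{\beta}{3}\tstep^2 t^{\beta-3} + O\bigl(\tstep^4 t^{\beta-5}\bigr),
\]
so both regions combined give the uniform bound $|\phi(t)| \leq C\tstep^2 (t+\tstep)^{\beta-3}$ on $[0, T]$.

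For $\beta \in (2, 3)$ I would then split $\tder{\gamma}\phi(t_n) = \omega_0\phi(t_n) + \sum_{k=1}^{n-1}\omega_k\phi(t_n - t_k)$ and bound termwise. The $\omega_0$-contribution, using $|\omega_0| = (3/(2\tstep))^\gamma = O(\tstep^{-\gamma})$ and the uniform bound on $\phi$, gives directly the second claimed term $Ct^{\beta-3}\tstep^{\min(2-\gamma, 2)}$. For the convolution sum I would insert $|\omega_k| \leq C\tstep t_k^{-\gamma-1}$ from~\eqref{eq:weights_approx} and split at $k = n/2$: for $k \leq n/2$ factor out $(t_n - t_k + \tstep)^{\beta-3} \leq Ct_n^{\beta-3}$ and apply Lemma~\ref{lem:sum} to the remaining $\tstep\sum t_k^{-\gamma-1}$; for $k > n/2$ factor out $t_k^{-\gamma-1} \leq Ct_n^{-\gamma-1}$ and apply Lemma~\ref{lem:sum} to the reindexed $\tstep\sum (t_l+\tstep)^{\beta-3}$. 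Both pieces collapse to $Ct^{-\gamma+\beta-3}\tstep^2$.

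The main obstacle is the integer endpoint $\beta = 3$, where the uniform bound is not sharp enough: the naive estimate $|\omega_0\phi| \leq C\tstep^{2-\gamma}$ overestimates by a factor $(t/\tstep)^{-\gamma}$ relative to the claimed $Ct^{-\gamma}\tstep^2$ when $\gamma > 0$ and $t > \tstep$. Here I would instead use that $\phi(t_n) \equiv \tstep^2$ for $n \geq 1$ reduces $\tder{\gamma}\phi(t_n)$ to $\tstep^2 \sum_{k=0}^{n-1}\omega_k$, and interpret this partial sum as the uncorrected CQ approximation to the Riemann--Liouville derivative $\partial_t^\gamma 1 = t^{-\gamma}/\Gamma(1-\gamma)$; combined with~\eqref{eq:weights_approx} this gives $\sum_{k=0}^{n-1}\omega_k = O(t_n^{-\gamma})$ and hence the sharp bound $|\tder{\gamma}\phi(t_n)| \leq C\tstep^2 t_n^{-\gamma}$. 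The case $\beta = 2$ is trivial since $\phi \equiv 0$.
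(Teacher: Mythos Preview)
Your approach is essentially the same as the paper's: the identical splitting $A_\beta + B_\beta$, the same use of Lemma~\ref{lem:CQ_pol} for $B_\beta$, and the same Newton--binomial analysis of $\phi = \sder g - g'$. Where the paper cites \cite[Lemma~5.3]{Lubich1988I} to bound the discrete convolution $\tstep\sum t_j^{-\gamma-1}(t-t_j)^{\beta-3}$, you unfold that lemma by splitting at $k=n/2$ and invoking Lemma~\ref{lem:sum}; the two arguments are equivalent.

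One minor misstatement: for $\gamma\in(0,1)$ your ``first piece'' ($k\le n/2$) does \emph{not} collapse to $Ct^{-\gamma+\beta-3}\tstep^2$. Lemma~\ref{lem:sum} with $\eta=-\gamma-1$ gives $\tstep\sum_{k\le n/2} t_k^{-\gamma-1}\le C\tstep^{-\gamma}$, so that piece yields $Ct_n^{\beta-3}\tstep^{2-\gamma}$, i.e.\ the \emph{second} claimed term, not the first. Since that term is already present in the bound, the final estimate is unaffected and the proof remains correct.

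Your handling of $\beta=3$ is actually more careful than the paper's. The paper asserts $A_3=0$ for $\gamma>0$, but with $\phi(t_0)=0$ and $\phi(t_j)=\tstep^2$ for $j\ge1$ one has $\tder{\gamma}\phi(t_n)=\tstep^2\sum_{k=0}^{n-1}\omega_k$, which is not zero. Your observation that this partial sum is $O(t_n^{-\gamma})$ (via the tail bound from \eqref{eq:weights_approx} and $\sum_{k\ge0}\omega_k=0$ for $\gamma>0$, or via the Riemann--Liouville interpretation) gives the correct $Ct^{-\gamma}\tstep^2$ and closes the case properly.
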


\begin{proof}
Throughout the proof, $C> 0$ denotes a generic constant allowed to change from one step to another.

First note that the error is 0 for $\beta = 0$ since $\partial_t g \equiv \sder g \equiv 0$. 
For $\beta > 0$, split the error as
\begin{equation}
\label{equ:A}
\left| \tder{\gamma} \sder g(t)  - \partial_t^{\gamma+1} g(t) \right| \leq \underbrace{\left|\tder{\gamma} (\sder g(t) - \partial_t g(t)) \right| }_{A_\beta} + \underbrace{\left| (\tder{\gamma}-\partial_t^\gamma) \partial_t g(t)\right| }_{B_\beta}.
\end{equation}
As $\sder g(t)-\partial_t g(t) = 0$ for $t \geq 0$ if $\beta = 1, 2$, we have that $A_1 = A_2 =  0$ and for $\gamma > 0$ also $A_3 = 0$. 

For $\beta \in (2,3]$ we apply  Newton's generalized binomial theorem to  $\sder g(t) = \frac1{2\tstep}((t+\tstep)^\beta - (t-\tstep)^\beta)$ and see that for $t > \tstep$
\[
  \begin{split}
\sder g(t) -\partial_t g(t)  &=\sum_{\substack{k = 1 \\k\text{ odd}}}^\infty
    \begin{pmatrix}
      \beta \\k
    \end{pmatrix} t^{\beta-k}\tstep^{k-1}-\beta t^{\beta-1}\\
&= \sum_{\substack{k = 3 \\k\text{ odd}}}^\infty
    \begin{pmatrix}
      \beta \\k
    \end{pmatrix} t^{\beta-k}\tstep^{k-1}\\
&= \tstep^2t^{-3}\sum_{\substack{k = 3 \\k\text{ odd}}}^\infty
    \begin{pmatrix}
      \beta \\k
    \end{pmatrix} t^{\beta-k+3}\tstep^{k-3}\\
&\leq \tstep^2t^{-3} \sum_{k = 0}^\infty
    \begin{pmatrix}
      \beta \\k+3
    \end{pmatrix} t^{\beta-k}\tstep^{k}\\
&\leq C\tstep^2t^{-3} \sum_{k = 0}^\infty
    \begin{pmatrix}
      \beta \\k
    \end{pmatrix} t^{\beta-k}\tstep^{k}\\
&= C \tstep^2t^{-3}(t+\tstep)^\beta\leq C \tstep^2 t^{\beta-3}.
  \end{split}
\]
For $t \in [0,\tstep]$ we have instead
\[
  \begin{split}
\sder g(t) -\partial_t g(t) &= 2^{\beta-1}\tstep^{\beta-2}t-\beta t^{\beta-1} = O(\tstep^{\beta-1}).
\end{split}
\]
As the convolution weights satisfy, see \eqref{eq:weights_approx}, $|\omega_j|\leq C \tstep t_j^{-\gamma-1}$, $j \geq 1$, with $|\omega_0| \leq C\tstep^{-\gamma}$
we have  for $t \in [t_n,t_{n+1})$, $n \geq 1$, 
\[
  \begin{split}    
A_\beta &\leq C\tstep^2\left(|\omega_0|t^{\beta-3}+|\omega_n|\tstep^{\beta-3}+\sum_{j = 1}^{n-1}|\omega_{j}|(t-t_j)^{\beta-3}\right)\\
&\leq C\tstep^2\left(\tstep^{-\gamma}t^{\beta-3}+t_n^{-\gamma-1}\tstep^{\beta-2}+\tstep\sum_{j = 1}^{n-1}t_j^{-\gamma-1}(t-t_j)^{\beta-3}\right)\\
&\leq C\tstep^2\left(\tstep^{-\gamma}t^{\beta-3}+t_n^{-\gamma-1}\tstep^{\beta-2}+t_n^{\beta-3}\max(1,\tstep^{-\gamma})\right) = C(t^{\beta-3}\tstep^{\min(2-\gamma,2)}) , \quad t \geq \tstep,
  \end{split}
\]
where we have used \cite[Lemma 5.3]{Lubich1988I} to bound the discrete convolution in the following way:
\[
  \begin{split}
    \tstep \sum_{j = 1}^{n-1} t_j^{-\gamma-1}(t-t_j)^{\beta-3} &\leq
\tstep^{\beta-3-\gamma} \sum_{j = 1}^{n-1} j^{-\gamma-1}(n-j)^{\beta-3}\\
&\leq C\tstep^{\beta-3-\gamma} n^{\max{(-\gamma-1,\beta-3,\beta-\gamma-3)}}\\
&\leq C \tstep^{-\gamma}t_n^{\beta-3} n^{\max{(-\gamma-\beta+2,0,-\gamma)}}\\
&\leq C t_n^{\beta-3}\max(1,\tstep^{-\gamma}),
  \end{split}
\]
where $C$ as always is allowed to depend on $T$. For $t \in (0,\tstep)$ we obtain a similar bound as  $A_\beta \leq C\tstep^{-\gamma+\beta-1} \leq Ct^{\beta-3}\tstep^{2-\gamma}$.

For $\beta = 0$, $B_0 = 0$ and for $\beta = 1$ or $\beta \geq 2$ we have from \eqref{eq:CQ_pol} with  $\partial_t g = \beta t^{\beta-1}$ that
\[
B_\beta \leq C \left\{
  \begin{array}{ll}
    0 & \beta = 1, \gamma \in (0,1)\\
    t^{-\gamma-1}\tstep& \beta = 1, \gamma \in (-1,0)\\
t^{-\gamma+\beta-3}\tstep^2 & \beta \geq 2.
  \end{array}
\right.
\]

Combining all the cases gives the stated result.
\end{proof}


We now prove the corresponding lemma for the corrected quadrature.

\begin{lemma}\label{lem:mixed_poly_corr}
  For $g(t) = t^\beta$ and $\beta =0,1$ or $\beta \in [2,3]$ we have
\[
\left|  \ctder{\gamma} \sder g(t)  - \partial_t^{\gamma+1} g(t) \right| \leq  C
  \left\{
  \begin{array}{ll}
0 & \beta = 0,1\\
0 & \beta = 2, \gamma \in (0,1)\\
t^{-\gamma+\beta-3}\tstep^2 & \beta \in\{2,3\}, \gamma \in (-1,0)\\
t^{-\gamma}\tstep^2+t^{-\gamma-1}\tstep^3 &\beta = 3, \gamma \in (0,1)\\
t^{-\gamma+\beta-3}\tstep^2 & \beta > 2, \beta \neq 3,\gamma \in (-1,0)\\
t^{-\gamma+\beta-3}\tstep^2+t^{-\gamma-1}\tstep^\beta+t^{\beta-3}\tstep^{\min(2,2-\gamma)} & \beta \in (2,3),  \gamma \in (0,1)\\
  \end{array}
\right.
\] 
for $t \in (0,T]$, $C > 0$ independent of time step $\tstep \in (0,\bar \tstep)$ for some small enough $\bar \tstep > 0$.
\end{lemma}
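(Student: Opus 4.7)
The plan is to mirror the structure of the proof of Lemma \ref{lem:mixed_poly}. Using $\partial_t^{\gamma+1}g = \partial_t^\gamma \partial_t g$ (Lemma \ref{lemma:prop}\ref{prop:alpha-1}) for $g(t) = t^\beta$ with $\beta \geq 1$, I would split
\[
\left|\ctder{\gamma}\sder g(t) - \partial_t^{\gamma+1}g(t)\right| \leq \underbrace{\left|\ctder{\gamma}(\sder g - \partial_t g)(t)\right|}_{A_\beta^*} + \underbrace{\left|(\ctder{\gamma}-\partial_t^\gamma)\partial_t g(t)\right|}_{B_\beta^*}.
\]
The term $B_\beta^*$ follows directly from Lemma \ref{lem:CQ_pol_corr} applied to $\partial_t g(t) = \beta t^{\beta-1}$: it yields the $t^{-\gamma + \beta - 3}\tstep^2$ contribution in every case, and the additional $t^{-\gamma - 1}\tstep^\beta$ term when $\gamma \in (0,1)$ and $\beta > 2$. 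The cases $\beta \in\{0,1,2\}$ are then immediate, because $\sder$ (together with its linear interpolant on $[0,\tstep]$) reproduces $\partial_tg$ exactly on polynomials of degree $\leq 2$, so $e(t) := \sder g(t) - \partial_t g(t) \equiv 0$ and only $B_\beta^*$ remains.

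For $A_\beta^*$ when $\beta \in (2,3]$ the key observation is that, by the definition of $\sder$, $\sder g(0) = \partial_tg(0) = 0$ for every $\beta \geq 1$. Hence the correction $w_{n0}e(0)$ vanishes; for $\gamma \in (-1,0)$ one additionally has $w_1 \equiv 0$, so $\ctder{\gamma}e = \tder{\gamma}e$ and the pointwise bounds on $A_\beta$ from the proof of Lemma \ref{lem:mixed_poly} transfer verbatim. For $\gamma \in (0,1)$ there is an extra contribution $w_{n1}e(t_1)$; combining $|w_{n1}| \leq Ct^{-\gamma-1}\tstep$ (from the definition of $w_1$ together with \eqref{eq:weights_approx}) with $|e(t_1)| = |2^{\beta-1}-\beta|\tstep^{\beta-1}$ produces precisely the $t^{-\gamma - 1}\tstep^\beta$ term.

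The integer case $\beta = 3$ admits a sharper direct treatment than the generic expansion, since then $e(t_n) = \tstep^2$ for every $n \geq 1$ while $e(0) = 0$. Linearity of $\ctder{\gamma}$ gives
\[
\ctder{\gamma}e(t_n) = \tstep^2\sum_{k=0}^{n-1}\omega_k + w_{n1}\tstep^2,
\]
which for $\gamma \in (0,1)$ simplifies via the identity $\sum_{k=0}^n\omega_k + w_{n0} + w_{n1} = 0$ (the exactness of $\ctder{\gamma}$ on constants) to $-\tstep^2(\omega_n + w_{n0})$, and for $\gamma \in (-1,0)$ uses the approximation $\sum_{k=0}^n\omega_k = \partial_t^\gamma 1(t_n) + O(t_n^{-\gamma-1}\tstep)$ from Lemma \ref{lem:CQ_pol}. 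In both cases the bounds $|\omega_n| \leq Ct_n^{-\gamma-1}\tstep$, $|w_{n0}| \leq Ct_n^{-\gamma}$, and $|w_{n1}| \leq Ct_n^{-\gamma-1}\tstep$ collapse this to the claimed $t^{-\gamma}\tstep^2$ (plus the $t^{-\gamma-1}\tstep^3$ term for $\gamma \in (0,1)$).

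The hard part, exactly as in Lemma \ref{lem:mixed_poly}, is the fractional-exponent regime $\beta \in (2,3)$. Here $e$ has no closed form but admits the Newton binomial expansion $e(t) = \sum_{k \geq 3,\,k\text{ odd}}\binom{\beta}{k}t^{\beta-k}\tstep^{k-1}$ for $t \geq \tstep$, with $e(t) = O(\tstep^{\beta-1})$ on $[0,\tstep]$. One must then bound $\tder{\gamma}e$ by splitting the convolution sum into the $j=0$, $j=n$ and $1 \leq j \leq n-1$ parts and invoking \cite[Lemma 5.3]{Lubich1988I} to control $\sum_j t_j^{-\gamma-1}(t-t_j)^{\beta-3}$, precisely as in the earlier proof. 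Assembling $A_\beta^*$ (with the added $w_{n1}e(t_1)$ correction for $\gamma > 0$) and $B_\beta^*$, and discarding terms absorbed into dominant contributions, yields the stated case distinctions.
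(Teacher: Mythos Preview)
Your proposal is correct and follows essentially the same route as the paper: the same $A_\beta^*+B_\beta^*$ split, Lemma~\ref{lem:CQ_pol_corr} for $B_\beta^*$, and the $A_\beta$ analysis of Lemma~\ref{lem:mixed_poly} carried over with the observation that $e(0)=0$ kills the $w_{n0}$ correction and (for $\gamma<0$) $w_1\equiv 0$ reduces $\ctder{\gamma}e$ to $\tder{\gamma}e$. Your explicit handling of $\beta=3$ via $e(t_n)=\tstep^2$ and the constant-exactness identity is in fact more careful than the paper, which simply asserts ``$A_\beta=0$ for $\beta=1,2,3$'' --- a claim that is not literally true for $\beta=3$ (since $e\not\equiv 0$) but is harmless because, as your computation shows, $A_3^*$ is of the same order as $B_3^*$ and hence absorbed into the stated bound.
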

\begin{proof}
  The proof is along the same lines as the proof of Lemma~\ref{lem:mixed_poly}, where using Lemma~\ref{lem:CQ_pol_corr} we note that
\[
B_\beta \leq C\left\{ 
\begin{array}{ll}
0 & \beta = 1,\\
0 & \beta = 2, \gamma \in (0,1)\\
t^{-\gamma+\beta-3}\tstep^2 & \beta \geq 2, \gamma \in (-1,0)\\
t^{-\gamma+\beta-3}\tstep^2+t^{-\gamma-1}\tstep^{\beta} & \beta > 2, \gamma \in (0,1).
\end{array}
\right.
\]
and as before $A_\beta = 0$ for $\beta = 1,2,3$ and $A_\beta = t^{\beta-3}O(\tstep^{\min(2,2-\gamma)})$ for $\beta \in (2,3)$.
\end{proof}

We next investigate the error for functions that are smooth for $t > 0$, but may have a singularity at $t = 0$.

\begin{lemma}
\label{lemma:mixederror}
There exists a constant $C >0$ independent of $\tstep \in (0,\bar \tstep)$ for some small enough $\bar \tstep > 0$, but depending on $\gamma$ and $T$ such that  
\begin{enumerate}[(a)]
\item For $\gamma \in (-1,0)$ and $t \in [\tstep,T]$ if $u \in \mathcal{C}^{3}(0,T]$
\begin{align*}
    \left|  \tder{\gamma} \sder u(t)  - \partial_t^{\gamma+1} u(t) \right| \leq  C &\left\{\partial_t u(\tstep) t^{-\gamma-1}\tstep\right.
\\&\left.+ \left(u^{(2)}(\tstep) t^{-\gamma-1}+\int_\tstep^t(t-\tau)^{-\gamma-1}| u^{(3)}(\tau)|d\tau\right)\tstep^2\right\}.
\end{align*}

\item For $\gamma \in (0,1)$ and $t \in [\tstep,T]$ if  $u \in \mathcal{C}^{4}(0,T]$
\[
  \begin{split}    
    \left|  \tder{\gamma} \sder u(t)  - \partial_t^{\gamma+1} u(t) \right| \leq &  C \left( u^{(2)}(\tstep) t^{-\gamma-1}\right.\\ &+\left.u^{(3)}(\tstep)t^{-\gamma}+\int_\tstep^t (t-\tau)^{-\gamma}| u^{(4)}(\tau)|d\tau\right)\tstep^2.
  \end{split}
\]

\end{enumerate}
\end{lemma}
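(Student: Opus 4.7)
The plan is to mirror the polynomial error analysis in Lemma~\ref{lem:mixed_poly}, but with Taylor expansion playing the role of the monomial ansatz. Using Lemma~\ref{lemma:prop}(\ref{prop:alpha-1}) to rewrite $\partial_t^{\gamma+1}u = \partial_t^\gamma \partial_t u$, I would split
\[
\left|\tder{\gamma}\sder u(t) - \partial_t^{\gamma+1} u(t)\right| \leq \underbrace{\left|\tder{\gamma}(\sder u - \partial_t u)(t)\right|}_{A} + \underbrace{\left|(\tder{\gamma} - \partial_t^\gamma)\,\partial_t u(t)\right|}_{B}
\]
so that the finite-difference error and the convolution-quadrature error are decoupled. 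This is the exact splitting used in the proof of Lemma~\ref{lem:mixed_poly}, so all that changes is how one estimates the two pieces for a general smooth-away-from-zero function instead of $t^\beta$.

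For the term $A$, Taylor's theorem with integral remainder gives, for $\tau \geq \tstep$,
\[
\sder u(\tau) - \partial_t u(\tau) = \frac{1}{2\tstep}\!\int_0^\tstep\!\frac{(\tstep-s)^2}{2}\bigl[u^{(3)}(\tau{+}s) + u^{(3)}(\tau{-}s)\bigr] ds,
\]
so $|\sder u(\tau)-\partial_t u(\tau)| \leq C\tstep^2\sup_{[\tau-\tstep,\tau+\tstep]} |u^{(3)}|$ (in case (b), integrate by parts once more to move from $u^{(3)}$ to $u^{(4)}$). Inserting into $A$ and using the weight bound $|\omega_j|\leq C\tstep\, t_j^{-\gamma-1}$ from \eqref{eq:weights_approx}, the resulting discrete convolution is then compared to $\int_\tstep^t(t-\tau)^{-\gamma-1}|u^{(3)}(\tau)|\,d\tau$ by the argument used in Lemma~\ref{lem:mixed_poly} (the summation-to-integral step via Lemma~\ref{lem:sum}).

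For the term $B$, I would Taylor-expand $\partial_t u$ about $\tstep$: for case (a),
\[
\partial_t u(\tau) = \partial_t u(\tstep) + (\tau-\tstep)\,u^{(2)}(\tstep) + \int_\tstep^\tau (\tau-s)\,u^{(3)}(s)\,ds,
\]
and for case (b) one further order, with remainder $\tfrac12\int_\tstep^\tau(\tau-s)^2 u^{(4)}(s)\,ds$. Apply $\tder{\gamma}-\partial_t^\gamma$ termwise: the constant piece $\partial_t u(\tstep)$ is controlled by Lemma~\ref{lem:CQ_pol} with $\beta=0$ (giving exactly the $t^{-\gamma-1}\tstep$ boundary term in case (a), and zero in case (b) since $\chi_\gamma=1$ removes it); the linear piece $(\tau-\tstep)u^{(2)}(\tstep)$ produces the $u^{(2)}(\tstep)t^{-\gamma-1}\tstep^2$ factor; the integral remainder is bounded by a convolution-type estimate using $|\omega_j|\leq C\tstep t_j^{-\gamma-1}$, which produces the $\int_\tstep^t(t-\tau)^{-\gamma-1}|u^{(3)}|\,d\tau\cdot\tstep^2$ (resp.\ with $u^{(4)}$) contribution.

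The principal obstacle is the region $\tau \in [0,\tstep]$, where $u^{(3)}$ (and in case (b), $u^{(4)}$) may be singular and neither the central-difference Taylor bound nor the Taylor expansion about $\tstep$ is directly applicable. The resolution is that this region contributes only through the single weight $\omega_n$ (where $t\in[t_n,t_{n+1})$) and the linear-interpolation definition of $\sder u$ on $[0,\tstep]$; using $|\omega_n|\leq C\tstep t^{-\gamma-1}$ and the fact that $\sder u(\tstep)-\partial_tu(\tstep)$ and $\sder u(0) - \partial_tu(0)$ can be replaced by quantities involving only $\partial_t u(\tstep)$, $u^{(2)}(\tstep)$, absorbs the boundary layer into the stated $\partial_t u(\tstep)\,t^{-\gamma-1}\tstep$ and $u^{(2)}(\tstep)\,t^{-\gamma-1}\tstep^2$ terms. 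This is the bookkeeping step that requires the most care.
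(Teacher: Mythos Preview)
Your A/B split is the inner mechanism of the proof of Lemma~\ref{lem:mixed_poly}, and it can in principle be lifted to general $u$, but the paper takes a shorter and more uniform route. It Taylor-expands $u$ itself (not $\partial_t u$, not the central-difference error) about $t=\tstep$ to order $\hat p=\lceil\gamma\rceil+3$ with the Peano kernel remainder,
\[
u(t)=\sum_{k=0}^{\hat p-1}\frac{u^{(k)}(\tstep)}{k!}(t-\tstep)^k+\frac{1}{(\hat p-1)!}\int_\tstep^T(t-\tau)_+^{\hat p-1}u^{(\hat p)}(\tau)\,d\tau,
\]
and applies the full operator $(\tder{\gamma}\sder-\partial_t^{\gamma+1})$ linearly to both pieces. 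The polynomial part is dispatched by Lemma~\ref{lem:mixed_poly} directly; after expanding $(t-\tstep)^k$ in powers of $t$, each monomial contributes exactly the $u^{(k)}(\tstep)$-weighted terms in the statement. For the remainder, the crucial observation is that both $\tder{\gamma}\sder$ and $\partial_t^{\gamma+1}$ are convolution operators in $t$, so they commute with the $\tau$-integration and reduce the remainder error to $\int_\tstep^t r_{\hat p-1,\gamma}(t-\tau)\,u^{(\hat p)}(\tau)\,d\tau$, where $r_{k,\gamma}(s)=(\tder{\gamma}\sder-\partial_t^{\gamma+1})s^k$ is again the polynomial error already bounded in Lemma~\ref{lem:mixed_poly}. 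This yields the stated integral term in one stroke and automatically sidesteps your ``principal obstacle'': the Peano remainder vanishes identically on $[0,\tstep]$ and the Taylor polynomial is globally smooth, so no separate boundary-layer bookkeeping is required.

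Your plan is workable, but the remainder in $B$ is the weak link as sketched. Saying it is ``bounded by a convolution-type estimate using $|\omega_j|\leq C\tstep t_j^{-\gamma-1}$'' controls $\tder{\gamma}R$, not the difference $(\tder{\gamma}-\partial_t^\gamma)R$; getting the latter for a non-monomial $R$ forces you either to re-expand $R$ via its own Peano kernel (which is exactly the paper's argument, just one level down) or to prove a general-function analogue of Lemma~\ref{lem:CQ_pol}, which is more than the weight bound alone provides.
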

\begin{proof}
Let $\hat p = \lceil\gamma\rceil+3$ and consider the Taylor expansion at $t = \tstep$ of $u$ with the Peano kernel remainder

\[
     u(t) = \sum_{k = 0}^{\hat p-1}\frac{u^{(k)}(\tstep)}{k!} (t-\tstep)^k +
    R_\tstep(t),
    \]
where 
\[
R_\tstep(t) := \frac{1}{(\hat p -1)!}  \int_\tstep^T (t-\tau)_+^{\hat p-1} u^{(\hat p)}(\tau) d \tau
\]
and $(t-\tau)_+ = \max(t-\tau,0)$ and $t \in [\tstep,T]$.
The first term is a polynomial to which we can directly apply Lemma~\ref{lem:mixed_poly}.  With the integral remainder we proceed as follows.  As $(t-\tau)_+^{\hat p-1} \in \mathcal{C}^{\lceil \gamma\rceil +1}[0,T]$ we can use the Leibniz integral rule and the composition of convolutions to see that the error due to the remainder also reduces to analysing the error for polynomials:
\[
  \begin{split}    
\left|\tder{\gamma} \sder R_\tstep(t)   - \partial_t^\gamma \partial_t R_\tstep(t) \right| 
\leq &  \frac{1}{(\hat{p}-1)!}  \left|\int_\tstep^t  r_{\hat{p}-1,\gamma} (t-\tau) u^{(\hat{p})}(\tau)d\tau \right|\\
\leq & \frac{1}{(\hat{p}-1)!} \int_0^{t-\tstep}| u^{(\hat{p})}(t-\tau)| \left| r_{\hat{p}-1,\gamma} (\tau) \right| d \tau,
\end{split}
\] 
where $r_{k,\gamma}(t) = \tder{\gamma} \sder g(t)- \partial_t^{\gamma+1} g(t)$, with $g(t)  =t^k$. Hence applying the result of Lemma~\ref{lem:mixed_poly} finishes the proof.

\end{proof}

The corresponding result with the corrected quadrature is stated next.

\begin{lemma}
\label{lemma:mixederror_corr}
 There exists a constant $C >0$ independent of $\tstep \in (0,\bar \tstep)$ for some small enough $\bar \tstep > 0$, but depending on $\gamma$ and $T$ such that  
\begin{enumerate}[(a)]
    \item For $\gamma \in (-1,0)$ and $t \in [\tstep,T]$ if  $u \in \mathcal{C}^{3}(0,T]$ 
\[
    \left|  \tder{\gamma} \sder u(t)  - \partial_t^{\gamma+1} u(t) \right| \leq 
C\left(u^{(2)}(\tstep) t^{-\gamma-1}+\int_\tstep^t(t-\tau)^{-\gamma-1}| u^{(3)}(\tau)|d\tau\right)\tstep^2.
\]
\item  For $\gamma \in (0,1)$ and $t \in [\tstep,T]$, if $u \in \mathcal{C}^{4}(0,T]$
\[
    \left|  \ctder{\gamma} \sder u(t)  - \partial_t^{\gamma+1} u(t) \right| \leq   C \left( u^{(3)}(\tstep) t^{-\gamma}+\int_\tstep^t (t-\tau)^{-\gamma}|u^{(4)}(\tau)|d\tau\right)\tstep^2.
\]
\end{enumerate}
\end{lemma}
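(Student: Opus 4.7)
The plan is to mirror the proof of Lemma~\ref{lemma:mixederror}, replacing Lemma~\ref{lem:mixed_poly} with its corrected counterpart Lemma~\ref{lem:mixed_poly_corr}. Specifically, with $\hat p = \lceil \gamma\rceil + 3$, I Taylor expand about $t = \tstep$ with Peano kernel remainder:
$$u(t) = \sum_{k = 0}^{\hat p-1}\frac{u^{(k)}(\tstep)}{k!} (t-\tstep)^k + R_\tstep(t),\qquad R_\tstep(t) = \frac{1}{(\hat p-1)!}\int_\tstep^t (t-\tau)^{\hat p-1} u^{(\hat p)}(\tau)\,d\tau,$$
for $t \in [\tstep,T]$, and split the error $\ctder{\gamma}\sder u - \partial_t^{\gamma+1} u$ by linearity into a polynomial part and a remainder part.

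For the polynomial part, Lemma~\ref{lem:mixed_poly_corr} does almost all the work. When $\gamma \in (-1,0)$ ($\hat p = 3$) the $k = 0,1$ contributions vanish (the correction $w_0$ handles the constant, while $\sder$ is exact on linears), leaving only the $k=2$ contribution of order $u^{(2)}(\tstep)t^{-\gamma-1}\tstep^2$. When $\gamma \in (0,1)$ ($\hat p = 4$) the $k = 0,1,2$ contributions all vanish (the extra correction $w_1$ is exact on linears even in the fractional case), and $k = 3$ yields $C(t^{-\gamma}\tstep^2 + t^{-\gamma-1}\tstep^3)$; since $t \geq \tstep$ the second summand is dominated by the first, giving the stated $u^{(3)}(\tstep)\, t^{-\gamma}\tstep^2$.

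For the remainder, $(t-\tau)_+^{\hat p-1}$ is smooth enough in $t$ on $[\tstep,T]$ that the convolution-of-convolutions identity used in Lemma~\ref{lemma:mixederror} still applies to $\ctder{\gamma}\sder$, yielding
$$\bigl| \ctder{\gamma}\sder R_\tstep(t) - \partial_t^{\gamma+1} R_\tstep(t) \bigr| \leq \frac{1}{(\hat p-1)!}\int_\tstep^t \bigl|\tilde r_{\hat p-1,\gamma}(t-\tau)\bigr|\,|u^{(\hat p)}(\tau)|\,d\tau,$$
where $\tilde r_{k,\gamma}(s) = \ctder{\gamma}\sder g(s) - \partial_t^{\gamma+1}g(s)$ with $g(s)=s^k$. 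Substituting the polynomial bounds from Lemma~\ref{lem:mixed_poly_corr}, and again using $\tstep \leq t-\tau$ on the range of integration to absorb $\tstep^3 (t-\tau)^{-\gamma-1}$ into $\tstep^2 (t-\tau)^{-\gamma}$ in the $\gamma>0$ case, produces the two stated integral kernels. The main obstacle is justifying this convolution swap for the \emph{corrected} operator $\ctder{\gamma}$: the correction weights $w_{nj}$ depend on $t$ and evaluate $g$ at the fixed points $0$ and $t_1$. This turns out to be harmless because $R_\tstep$ vanishes identically on $[0,\tstep]$, so the correction contributions drop out and we are left with exactly the analogue of the Leibniz step used in the uncorrected case.
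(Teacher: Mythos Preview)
Your approach is exactly the one the paper intends (it states the lemma without proof, implicitly relying on the argument of Lemma~\ref{lemma:mixederror} with Lemma~\ref{lem:mixed_poly_corr} in place of Lemma~\ref{lem:mixed_poly}), and the handling of the polynomial part is correct. There is, however, a genuine slip in your treatment of the remainder for $\gamma\in(0,1)$.

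You write the remainder error as $\int_\tstep^t |\tilde r_{\hat p-1,\gamma}(t-\tau)|\,|u^{(\hat p)}(\tau)|\,d\tau$ and then invoke ``$\tstep\le t-\tau$ on the range of integration'' to absorb $\tstep^3(t-\tau)^{-\gamma-1}$ into $\tstep^2(t-\tau)^{-\gamma}$. But on $\tau\in[\tstep,t]$ the quantity $t-\tau$ runs down to $0$, so this inequality fails; worse, $(t-\tau)^{-\gamma-1}$ is not integrable at $\tau=t$ when $\gamma>0$, so the bound obtained from Lemma~\ref{lem:mixed_poly_corr} for $\tilde r_{3,\gamma}$ cannot be fed into the integral as written.

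The remedy is already contained in your final paragraph, but you should use it \emph{instead of} the absorption argument rather than after it. Because $R_\tstep$ (and hence $\sder R_\tstep$) vanishes at $0$, the correction terms contribute nothing via $w_0$, and the convolution swap yields the \emph{uncorrected} kernel $r_{\hat p-1,\gamma}$ from Lemma~\ref{lem:mixed_poly} inside the integral. For $\beta=3$, $\gamma\in(0,1)$, that lemma gives $|r_{3,\gamma}(s)|\le Cs^{-\gamma}\tstep^2$ directly, with no $s^{-\gamma-1}\tstep^3$ term to absorb. One caveat: your claim that the corrections ``drop out'' entirely is slightly too strong for the $w_1$ term, since it multiplies $(\sder R_\tstep)(\tstep)=\tfrac{1}{2\tstep}R_\tstep(2\tstep)$, which need not vanish. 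But $|R_\tstep(2\tstep)|\le C\tstep^{\hat p}\sup_{[\tstep,2\tstep]}|u^{(\hat p)}|$ and $|w_1(t)|\le Ct^{-\gamma-1}\tstep$, so this contribution is $O(t^{-\gamma-1}\tstep^{\hat p})$, which is of higher order and absorbed into the stated bound.
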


We next state the smoothness assumptions we will make on the solution. 

\begin{assumption}\label{ass:u}
Let $u \in \mathcal{C}^2([0,T]; H^1(\Omega))$, $u \in \mathcal{C}^4((0,T]; H^1(\Omega))$. Further, let 
\[
  \|\partial_t^{(k)} u(t)\|_1 \leq C_0(1+c_k t^{2+\alpha-k}) \qquad t > 0,
  \]
 for some constants $C_0>0$, $c_k \geq 0$, $k = 0,\dots,4$, and
   \[
\alpha \geq \left\{\begin{array}{cc}
    -\gamma & \text{if } \gamma \in (-1,0) \\
    1-\gamma & \text{if } \gamma \in (0,1).
\end{array}\right.
\]
\end{assumption}

\begin{remark}
\label{remark:err_smoothness}
  Recall, see Remark~\ref{rem:smoothness_u}, that it is not realistic to assume too much smoothness at $t = 0$. Namely, we expect that $u \sim u_0+v_0 t+w_0t^2+O(t^{2+\lceil\gamma\rceil -\gamma})$.
This behaviour justifies the smoothness assumptions we make on $u$. If $u$ happens to be smoother, i.e., with continuous derivatives of order 3 and 4, we can simply set constants $c_3$ and $c_4$ to zero.
\end{remark}

\begin{theorem}\label{thm:main_disc}
Let $u$ be the solution of \eqref{eq:PDE_weak} and $u_n \in V_h$, $n = 0,\dots,N$, the solution of the fully discrete system \eqref{eq:fully_discrete} under the CFL condition \eqref{eq:CFL}. Assuming that $u$ satisfies Assumption~\ref{ass:u} we have that
\[
\begin{split}
\left\| \frac{u_n - u_{n-1}}{\tstep} - u^\prime \left(t_n - \frac{1}{2}\tstep\right) \right\|+ \left\| \frac{u_n + u_{n-1}}{2} - u \left(t_n - \frac{1}{2}\tstep\right) \right\| \leq& \mathcal{E}_h +\left\| (R_h-I)u'(t_n-\frac12\tstep)\right\|+\mathcal{A}_\gamma\\&+
C(c_3+c_4)\tstep^{1+\alpha}+\tilde C \tstep^2
\end{split}
\]
and
\[ 
\begin{split}
\left\| \frac{u_n + u_{n-1}}{2} - u\left(t_n - \frac{1}{2}\tstep \right) \right\|_{1} 
\leq& \mathcal{E}_h+\left\| (R_h-I)u(t_n-\frac12\tstep)\right\|_1+\mathcal{A}_\gamma\\ &+
C(c_3+c_4)\tstep^{1+\alpha}+\tilde C \tstep^2,
\end{split}
\]
where
\[
\begin{split}
\mathcal{E}_h \leq& \tstep\sum_{j = 1}^{N-1}\|(R_h-I)\partial_t^2u(t_j)\|
+ \left\| (R_h -I)v_0 \right\|  \\&+ \frac{\tstep}{2}\left\| (R_h-I) u^{\prime \prime} (0) \right\| 
+ \tstep \sum_{j = 1}^{N-1}\|(R_h-I) \partial_t^\gamma \partial_t u(t_j)\|
\end{split}
\]
and for $\gamma \in (-1,0)$
\[
\mathcal{A}_\gamma \leq C\|\partial_t R_h u(\tstep)\|\tstep
\]
and for $\gamma \in (0,1)$
\[
\mathcal{A}_\gamma \leq  C\|\partial_t^2 R_h u(\tstep)\|\tstep^{2-\gamma}.
\]
The constants $C, \tilde C>0$ are  independent of $\tstep \in (0,\bar \tstep)$, for \ sufficiently small $\bar \tstep$, both are allowed to depend on $T$ and $\gamma \in (-1,1)\setminus 0$. The constant $C$ is independent of $u$, whereas $\tilde C$ can depend on $c_k$, $k = 0,\dots, 4$.
\end{theorem}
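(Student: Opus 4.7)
The plan is to follow the classical Galerkin error analysis for the wave equation, adapted to the leapfrog-CQ coupling. I start by splitting the error via the Ritz projection: write $u(t_n)-\u_n = (u(t_n)-R_h u(t_n)) + (R_h u(t_n)-\u_n) =: \rho_n + \theta_n$. The $\rho_n$ contributions give exactly the terms gathered in $\mathcal{E}_h$ after we pass through the central-difference, leapfrog, and CQ operators (the second-time-derivative piece and $v_0$ piece coming from the choice of $\u_0,\u_1$ in \eqref{eq:initial_data}). The real work is to bound $\theta_n$, which satisfies the fully discrete equation with a residual $r_n = r_n^{\rm wave} + r_n^{\rm frac}$, where $r_n^{\rm wave}$ comes from the leapfrog truncation of $\partial_t^2$ and $r_n^{\rm frac}$ from the mixed approximation $\tder{\gamma}\sder - \partial_t^{\gamma+1}$ applied to $R_hu$ (after using $(\nabla R_h u,\nabla v)=(\nabla u,\nabla v)$ to eliminate $\nabla\rho$).

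Next I would run the standard discrete energy argument. Testing the error equation with $\sder \theta_n$, summing from $n=1$ to $n=N-1$, and using summation-by-parts gives the discrete energy identity
\begin{equation*}
\tfrac12\bigl\|\tfrac{\theta_{N}-\theta_{N-1}}{\tstep}\bigr\|^2
+\tfrac12\bigl(\nabla\tfrac{\theta_{N}+\theta_{N-1}}{2},\nabla\tfrac{\theta_{N}+\theta_{N-1}}{2}\bigr)
-\tfrac{\tstep^2}{4}\|\nabla\sder\theta_n\|^2\big|_0^{N-1}
+\ag \tstep\!\sum_{n=1}^{N-1}(\tder{\gamma}\sder \theta(t_n),\sder\theta_n)
= \tstep\!\sum_{n=1}^{N-1}(r_n,\sder\theta_n),
\end{equation*}
up to initial terms at $n=0,1$. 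The CFL condition \eqref{eq:CFL} together with the inverse inequality absorbs the $-\tfrac{\tstep^2}{4}\|\nabla\sder\theta\|^2$ term into the kinetic energy, and Lemma~\ref{lem:herglotz} gives non-negativity of the fractional contribution (up to one correction term at $n=0$, which is absorbed using the initial data). After Cauchy--Schwarz and discrete Gronwall, one obtains
\begin{equation*}
\max_n\bigl\|\tfrac{\theta_n-\theta_{n-1}}{\tstep}\bigr\|+\max_n\|\nabla\tfrac{\theta_n+\theta_{n-1}}{2}\|\leq C\Bigl(\text{initial}+\tstep\sum_{n=1}^{N-1}\|r_n\|\Bigr),
\end{equation*}
and combining with the bounds on $\rho_n$ produces the $\mathcal{E}_h$ part.

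To bound the residual I invoke Lemma~\ref{lemma:mixederror} for $n\ge 2$ (i.e.\ $t_n\ge \tstep$) applied to $R_h u$. Assumption~\ref{ass:u} on the derivatives $u^{(k)}$ controls the integrals $\int_\tstep^{t_n}(t_n-\tau)^{-\gamma-1}|u^{(3)}(\tau)|\,d\tau$ (resp.\ $-\gamma$ with $u^{(4)}$); summing in $n$ via Lemma~\ref{lem:sum} with $\eta=\alpha-1$ and $\eta=\alpha$ produces the $\tstep^2$ part together with a term of order $\tstep^{1+\alpha}$ proportional to $c_3+c_4$ (this is where the choice of $\alpha$ in Assumption~\ref{ass:u} is designed to make the integrable singularity at $0$ exactly integrable). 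The leapfrog truncation $r_n^{\rm wave}$ contributes a pure $\tstep^2$ piece using Taylor with Peano remainder under the $\mathcal{C}^4$ regularity on $(0,T]$ (here $c_3,c_4$ appear again near $t=0$, with the same rate $\tstep^{1+\alpha}$).

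The only remaining obstacle, and the main technical point, is the first time step $n=1$: for $t<\tstep$ the estimates of Lemma~\ref{lemma:mixederror} do not apply, so the contribution must be bounded directly from the definition of $\tder{\gamma}\sder$ on $[0,\tstep]$. Writing out $\tder{\gamma}\sder R_hu(\tstep)$ explicitly using $\sder R_h u(0)=R_h v_0$ and $\sder R_h u(\tstep)=\tfrac1{2\tstep}(R_hu(2\tstep)-R_hu(0))$, and using the weight estimate $|\omega_0|\le C\tstep^{-\gamma}$, yields a contribution of size $\|\partial_t R_hu(\tstep)\|\tstep$ for $\gamma\in(-1,0)$ and $\|\partial_t^2 R_hu(\tstep)\|\tstep^{2-\gamma}$ for $\gamma\in(0,1)$, i.e.\ exactly the term $\mathcal{A}_\gamma$. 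Putting the three pieces together ($\mathcal{E}_h$ from $\rho$, the $\tstep^2$ plus $\tstep^{1+\alpha}$ contributions from the interior residuals, and $\mathcal{A}_\gamma$ from the first step) and bounding the $L^2$-norm of $\tfrac12(\theta_n+\theta_{n-1})$ by telescoping from $\theta_0=0$ and the velocity bound delivers both stated inequalities, with $\tilde C$ absorbing all $c_k$-dependent constants coming from the smooth part of $u$.
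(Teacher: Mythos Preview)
Your overall architecture is right and matches the paper: split via the Ritz projection, derive the discrete energy identity by testing with $\sder \theta_n$, use the CFL condition and the inverse inequality to make the energy coercive, invoke Lemma~\ref{lem:herglotz} (with $\sder e^h(0)=0$ from \eqref{eq:initial_data}, so no boundary term survives), and then reduce everything to $\tstep\sum_n\|\delta_n\|$ and $\tstep\sum_n\|\varepsilon_n\|$. The paper does not use Gronwall but a $\max_m E^e_m$ absorption trick, and uses Poincar\'e--Friedrichs rather than telescoping for the $L^2$ part; these are cosmetic differences.

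The genuine gap is your explanation of where $\mathcal{A}_\gamma$ comes from. Lemma~\ref{lemma:mixederror} is stated for $t\in[\tstep,T]$, so it already covers $n=1$ (note $t_1=\tstep$, not $t_2$). More importantly, that lemma does \emph{not} produce only the integral term you sum: it also contains the Taylor-coefficient contributions $\|\partial_t R_h u(\tstep)\|\,t^{-\gamma-1}\tstep$ for $\gamma\in(-1,0)$ and $\|\partial_t^2 R_h u(\tstep)\|\,t^{-\gamma-1}\tstep^2$ for $\gamma\in(0,1)$. Summing these over \emph{all} $n\ge 1$ via Lemma~\ref{lem:sum} with $\eta=-\gamma-1$ yields precisely $\mathcal{A}_\gamma$: for $\gamma\in(-1,0)$ the sum $\tstep\sum_n t_n^{-\gamma-1}$ is bounded, giving $C\|\partial_t R_hu(\tstep)\|\tstep$; for $\gamma\in(0,1)$ it is $O(\tstep^{-\gamma})$, giving $C\|\partial_t^2 R_hu(\tstep)\|\tstep^{2-\gamma}$. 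Your claim that the interior sum contributes only $\tstep^2$ and $\tstep^{1+\alpha}$ therefore drops the dominant term. Conversely, a direct crude bound at $n=1$ alone gives, for $\gamma\in(-1,0)$, a contribution $\tstep\|\varepsilon_1\|=O(\tstep^{1-\gamma})$, which is \emph{strictly better} than $\mathcal{A}_\gamma$; so the first step is not the source of the loss. Fix: keep all three pieces of Lemma~\ref{lemma:mixederror} when you sum in $n$, and identify $\mathcal{A}_\gamma$ with the leading Taylor-coefficient piece, as the paper does in \eqref{equ:boundingA_gn}--\eqref{equ:boundingA_gp}.
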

\begin{proof}
Let $\e_n = R_h u(t_n) - \u_n$ be the error which satisfies,
\begin{equation}
\label{equ:enprob}
\frac{1}{\tstep^2}\left(  \e_{n+1} -2 \e_n + \e_{n-1} , w \right) + \left( \nabla \e_n , \nabla w \right) + \left( \ag \tder{\gamma} \sder \e (t_n) , w \right) = \left( \delta_n + \ag\varepsilon_n , w \right), 
\end{equation}
for all $w \in V^h$, where
\[
  \delta_n = \frac{1}{\tstep^2}R_h \left( u(t_{n+1}) - 2u(t_n) + u(t_{n-1}) \right)- \partial_t^2 u(t_n)
\]
and
$$ 
\varepsilon_n =  R_h  \tder{\gamma} \sder u (t_n) - \partial_t^{\gamma+1} u(t_n).
$$
Mimicking the continuous case, we test with $w=\bar{\partial}_t \e_n$ 
\begin{equation}
\label{equ:before_sum}
E_{n+1}^e - E_n^e = \tstep \left( \delta_n + \ag\varepsilon_n, \bar{\partial}_t \e_n \right) - \tstep \left( \ag \tder{\gamma} \sder \e (t_n) , \bar{\partial}_t \e_n \right)
\end{equation}
where 
\begin{align}
E_n^e &= \frac{1}{2} \left\| \frac{\e_n - \e_{n-1}}{\tstep} \right\|^2 + \frac{1}{2} \left( \nabla \e_n, \nabla \e_{n-1} \right)\label{equ:disc_energy}\\
&=\frac{1}{2} \left\| \frac{\e_n - \e_{n-1}}{\tstep} \right\|^2 +\frac{1}{4}\left( \nabla e_n^h, \nabla e_{n-1}^h\right) - \frac{1}{8} \left\| \nabla e_n^h - \nabla e_{n-1}^h \right\|^2  + \frac{1}{8}\left( \left\| \nabla e_n^h \right\|^2 + \left\| \nabla e_{n-1}^h  \right\|^2 \right)\nonumber \\
&\geq \frac12\left(1-\frac{1}{4}\Cinv^2 \tstep^2 h^{-2}\right)\left\| \frac{\e_n - \e_{n-1}}{\tstep} \right\|^2+ \frac{1}{2}\left\| \frac{\nabla \e_n + \nabla \e_{n-1}}{2} \right\|^2.
\end{align}
Under the CFL condition \eqref{eq:CFL}, we have that the energy is nonnegative and bounded below as
\begin{equation}\label{eq:enCFL}
E_n^e \geq \frac{1}{4} \left\| \frac{\e_n - \e_{n-1}}{\tstep} \right\|^2 + \frac{1}{2}\left\| \frac{\nabla \e_n + \nabla \e_{n-1}}{2} \right\|^2.    
\end{equation}

As $\sder e^h(0) = 0$, see \eqref{eq:initial_data}, from Lemma~\ref{lem:herglotz} it follows that
\[
\tstep\sum_{n=1}^{N-1} \left( \tder{\gamma} \sder e^h (t_n), \sder e^h_n \right) \geq 0.
\]
Hence, by taking the sum over $n=1,\dots,J$, $J \leq N-1$, of \eqref{equ:before_sum}  we obtain the estimate
\[
E_J^e \leq E_1^e + \tstep \sum_{n=1}^{J} \left( \delta_n + \ag\varepsilon_n, \bar{\partial}_t \e_n \right).
\]
Using the Cauchy-Schwarz inequality we have,
\begin{equation}\label{eq:discEineq}
E_J^e \leq E_1^e + \tstep \sum_{n=1}^{J} \|\delta_n + \ag\varepsilon_n \| \| \bar{\partial}_t \e_n \| \leq E_1^e +  \left( \tstep \sum_{n=1}^{N-1} \| \delta_n + \ag\varepsilon_n \| \right)^2 + \frac{1}{2} \max_{m  \leq N} E_m^e
\end{equation}
due to $\| \bar{\partial}_t \e_n \|^2 \leq E_{n+1}^e + E_n^e \leq 2\max\limits_m E_m^e$.
Hence
\begin{equation}\label{equ:E_m}
\begin{split}  
\frac{1}{2}\max_{m\leq N} E_m^e &\leq E_1^e + \left( \tstep \sum_{n=1}^{N-1} \| \delta_n + \ag\varepsilon_n \| \right)^2\\
&\leq E_1^e + \left( \tstep \sum_{n=1}^{N-1} \| \delta_n\| + \ag \tstep \sum_{n=1}^{N-1}\|\varepsilon_n \| \right)^2.
\end{split}  
\end{equation}
It remains to bound  $\delta_n$, $\varepsilon_n$ and the initial energy. 
 Lemma~\ref{lem:2nderr} and the triangle inequality imply
\begin{equation}
\label{equ:delta_estimate} 
\begin{split}
 \tstep\sum_{n = 1}^{N-1}\| \delta_n \| &\leq \tstep\sum_{n = 1}^{N-1}\|(R_h-I)\partial_t^2u(t_n)\|+C\tstep^2\int_\tstep^t \|R_h u^{(4)}(\tau)\|d\tau+\tstep\int_0^{2\tstep}\|R_h u^{(3)}(\tau)\|d\tau\\
 &\leq \tstep\sum_{n = 1}^{N-1}\|(R_h-I)\partial_t^2u(t_n)\|+C(1+c_4\tstep^{\alpha-1})\tstep^2+C\tstep^2(1+c_3\tstep^{\alpha-1})\\
 &= \tstep\sum_{n = 1}^{N-1}\|(R_h-I)\partial_t^2u(t_n)\|+C(c_3+c_4)\tstep^{1+\alpha}+ O(\tstep^2).
 \end{split}
\end{equation}

To bound $\varepsilon_n$ we split the error  into two parts
\[
  \left\| \varepsilon_n \right\| \leq \underbrace{\left\| R_h \tder{\gamma} \sder u(t_n) - R_h \partial_t^\gamma \partial_t u(t_n) \right\|}_{(A_n)} + \underbrace{\left\| R_h \partial_t^\gamma \partial_t u(t_n) - \partial_t^\gamma \partial_t u(t_n)\right\|}_{(B_n)}.
\]

Hence by Lemma~\ref{lemma:mixederror}, we have for $\gamma \in (-1,0)$
\begin{equation}  
\label{equ:boundingA_gn}
\begin{split}
 A_n &\leq Ct_n^{-\gamma-1}(\|\partial_t R_h u(\tstep)\|\tstep+\|\partial_t^2 R_h u(\tstep)\|\tstep^2)+C\tstep^2 \int_\tstep^{t_n}(t_n-\tau)^{-\gamma-1}\|R_h u^{(3)}(\tau)\|d \tau\\
 &\leq Ct_n^{-\gamma-1}(\|\partial_t R_h u(\tstep)\|\tstep+C_0(1+c_2\tstep^\alpha)\tstep^2)+C\tstep^2 \int_0^{t_n}(t_n-\tau)^{-\gamma-1}(1+c_3\tau^{\alpha-1})d \tau\\
&\leq Ct_n^{-\gamma-1}(\|\partial_t R_h u(\tstep)\|\tstep+\tstep^2).
\end{split}
\end{equation}
Similarly, for $\gamma \in (0,1)$
\begin{equation}
\label{equ:boundingA_gp}
\begin{split}
A_n &\leq C\tstep^2\left(\|\partial_t^2 R_h u(\tstep)\| t_n^{-\gamma-1}+\|\partial_t^3 R_h u(\tstep)\|t_n^{-\gamma}+ \int_\tstep^{t_n}(t_n-\tau)^{-\gamma}\|R_h u^{(4)}(\tau)\|d \tau\right)\\
&\leq C\tstep^2\left(\|\partial_t^2 R_h u(\tstep)\| t_n^{-\gamma-1}+C(1+c_3\tstep^{\alpha-1})t_n^{-\gamma}+ c_4\tstep^{\alpha-1}t_n^{-\gamma}\right),
\end{split}
\end{equation}
where we used the smoothness assumption and the calculation (for $n > 1$)
\[
\begin{split}
    \int_\tstep^{t_n} (t_n-\tau)^{-\gamma}\tau^{\alpha-2} d\tau 
    &= t_n^{-\gamma+\alpha-1} \int_{\tstep/t_n}^1 (1-\tau)^{-\gamma}\tau^{\alpha-2} d\tau\\
    &= t_n^{-\gamma+\alpha-1} \left(\int_{\tstep/t_n}^{1/2} (1-\tau)^{-\gamma}\tau^{\alpha-2}
d\tau + \int_{1/2}^1 (1-\tau)^{-\gamma}\tau^{\alpha-2} d\tau\right)\\
&\leq C t_n^{-\gamma}\tstep^{\alpha-1}.
\end{split}
\]

Thus from Lemma~\ref{lem:sum} we have that if $\gamma \in (-1,0)$
\[
\begin{split}
 \tstep \sum_{n = 1}^{N-1} \|\varepsilon_n\| \leq &C\|\partial_t R_h u(\tstep)\|\tstep+O(\tstep^2)\\
 &+ \tstep \sum_{n = 1}^{N-1}\|(R_h-I) \partial_t^\gamma \partial_t u(t_n)\|
 \end{split}
\]
and for $\gamma \in (0,1)$
\[
\begin{split}
\tstep \sum_{n = 1}^{N-1} \|\varepsilon_n\| \leq& C\|\partial_t^2 R_h u(\tstep)\|\tstep^{2-\gamma}+(c_3+c_4) \tstep^{\alpha+1} +O(\tstep^2)\\
&+ \tstep \sum_{n = 1}^{N-1}\|(R_h-I) \partial_t^\gamma \partial_t u(t_n)\|.
\end{split}
\]

It remains to  estimate the initial error
\[
E_1^e = \frac{1}{2} \left\| \frac{\e_1 - \e_0}{\tstep} \right\|^2 + \frac{1}{2}(\nabla \e_1, \nabla \e_0).
\]
Recalling  the definition of the initial data \eqref{eq:initial_data} and that $\e_n = R_h u(t_n)-\u_n$ we have 
\begin{align*}
\left\| \frac{\e_1 - \e_0}{\tstep} \right\|&= \left\| R_h \left( \frac{u_0}{\tstep}+v_0 + \frac{\tstep }{2} u^{\prime \prime}(0) + \frac{1}{2\tstep}\int_0^\tstep (t-\tau)^2 u^{\prime \prime\prime} (\tau)d\tau \right)\right. \\ 
&\quad \left.- \left( \frac{u_0}{\tstep}+v_0 + \frac{\tstep }{2} P_h u^{\prime \prime} (0) \right) - \frac{1}{\tstep} \left(R_h u_0 - u_0\right) \right\|\\
&=\left\| R_h \left( v_0 + \frac{\tstep }{2} u^{\prime \prime}(0) + \frac{1}{2\tstep}\int_0^\tstep (t-\tau)^2 u^{\prime \prime\prime} (\tau)d\tau \right) - \left( v_0 + \frac{\tstep }{2} P_h u^{\prime \prime} (0) \right)  \right\|\\
&\leq \left\| R_h v_0 - v_0 \right\|  + \frac{\tstep}{2}\left\| R_h u^{\prime \prime} (0) - P_h u^{\prime \prime} (0) \right\| + \frac{\tstep}2\int_0^\tstep \|u^{\prime \prime\prime} (\tau)\|d\tau\\
&\leq \left\| R_h v_0 - v_0 \right\|  + \frac{\tstep}{2}\left\| (R_h-I) u^{\prime \prime} (0) \right\| + Cc_3 \tstep^{1+\alpha} +O(\tstep^2),
\end{align*}
where we used the properties of the $L^2$ projection to see that $\left\| R_h u^{\prime \prime} (0) - P_h u^{\prime \prime} (0) \right\| \leq \left\| (R_h-I) u^{\prime \prime} (0) \right\|$.
The definition of $R_h$ implies that the second term in the initial error vanishes: $(\nabla \e_1, \nabla \e_0) = 0$. This gives the error in the discrete norm \eqref{eq:enCFL}.

 To present the error in a more classical norm we proceed as follows
\[
\begin{split}
     \left\| \frac{u_n - u_{n-1}}{\tstep} -u'(t_n-\frac12\tstep)\right\| &\leq 
    \left\| \frac{\e_n - \e_{n-1}}{\tstep} \right\|+
    \left\| R_h\frac{u(t_n) - u(t_{n-1})}{\tstep} -u'(t_n-\frac12\tstep)\right\|\\
    &\leq \left\| \frac{\e_n - \e_{n-1}}{\tstep} \right\|+
    \left\| (R_h-I)u'(t_n-\frac12\tstep)\right\|+\frac12 \tstep \int_{t_{n-1}}^{t_n} \|u'''(\tau)\|d\tau\\
        &\leq \left\| \frac{\e_n - \e_{n-1}}{\tstep} \right\|+
    \left\| (R_h-I)u'(t_n-\frac12\tstep)\right\|+Cc_3 \tstep(t_n^{\alpha}-t_{n-1}^{\alpha})+ O(\tstep^2).
\end{split}
\]
Also
\[
\begin{split}
    \left\| \frac{u_n + u_{n-1}}{2} -u(t_n-\frac12\tstep)\right\| &\leq
    \left\| \frac{\e_n + \e_{n-1}}{2} \right\|+
    \left\| R_h\frac{u(t_n) + u(t_{n-1})}{2} -u(t_n-\frac12\tstep)\right\|\\
    &\leq C\left\| \frac{\nabla \e_n + \nabla \e_{n-1}}{2} \right\|+
    \left\| (R_h-I)u(t_n-\frac12\tstep)\right\|+O(\tstep^2),
\end{split}
\]
where we used the Poincar\'e-Friedrichs inequality in the last step. Similarly
\[
    \left\| \frac{u_n + u_{n-1}}{2} -u(t_n-\frac12\tstep)\right\|_1 
    \leq C\left\| \frac{\nabla \e_n + \nabla \e_{n-1}}{2} \right\|+
    \left\| (R_h-I)u(t_n-\frac12\tstep)\right\|_1+O(\tstep^2).
\]
Combining this with \eqref{eq:enCFL} and the estimate in the discrete norm gives the required result.
\end{proof}


We next turn to the corrected scheme.

\begin{theorem}\label{thm:main_disc_corr}
Let $u$ be the solution of \eqref{eq:PDE_weak} and $u_n \in V_h$, $n = 0,\dots,N$, the solution of the corrected fully discrete system \eqref{eq:fully_discrete_corr} under the CFL condition \eqref{eq:CFL}. If $u$ satisfies the smoothness conditions from Theorem~\ref{thm:main_disc}
\[
\begin{split}
\left\| \frac{u_n - u_{n-1}}{\tstep} - u^\prime \left(t_n - \frac{1}{2}\tstep\right) \right\|+ \left\| \frac{u_n + u_{n-1}}{2} - u \left(t_n - \frac{1}{2}\tstep\right) \right\| \leq& \mathcal{E}_h +\left\| (R_h-I)u'(t_n-\frac12\tstep)\right\|\\&+
C(c_3+c_4)\tstep^{1+\alpha}+\tilde C \tstep^2
\end{split}
\]
and
\[ 
\begin{split}
\left\| \frac{u_n + u_{n-1}}{2} - u\left(t_n - \frac{1}{2}\tstep \right) \right\|_{1} 
\leq& \mathcal{E}_h+\left\| (R_h-I)u(t_n-\frac12\tstep)\right\|_1\\ &+
C(c_3+c_4)\tstep^{1+\alpha}+\tilde C \tstep^2,
\end{split}
\]
where
\[
\begin{split}
\mathcal{E}_h \leq& \tstep\sum_{j = 1}^{N-1}\|(R_h-I)\partial_t^2u(t_j)\|
+ \left\| (R_h-I) v_0 \right\|  \\&+ \frac{\tstep}{2}\left\| (R_h-I) u^{\prime \prime} (0) \right\| 
+ \tstep \sum_{j = 1}^{N-1}\|(R_h-I) \partial_t^\gamma \partial_t u(t_j)\|.
\end{split}
\]
The constants $C, \tilde C>0$ are  independent of $\tstep \in (0,\bar \tstep)$, for \ sufficiently small $\bar \tstep$, both are allowed to depend on $T$ and $\gamma \in (-1,1)\setminus 0$. The constant $C$ is independent of $u$, whereas $\tilde C$ can depend on $c_k$, $k = 1,\dots, 4$.
\end{theorem}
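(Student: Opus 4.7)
The plan is to follow the proof of Theorem~\ref{thm:main_disc} almost verbatim, replacing $\tder{\gamma}$ by $\ctder{\gamma}$ throughout. With the error $\e_n = R_h u(t_n)-\u_n$, the corresponding error equation has two residuals: $\delta_n$, identical to the one in the proof of Theorem~\ref{thm:main_disc} and bounded exactly as in \eqref{equ:delta_estimate}, and the modified consistency residual
\[
\varepsilon_n = R_h\ctder{\gamma}\sder u(t_n)-\partial_t^{\gamma+1}u(t_n).
\]
Testing with $\sder \e_n$, summing, and invoking the CFL condition \eqref{eq:CFL} reproduces the discrete energy lower bound \eqref{eq:enCFL}, and since the initial data \eqref{eq:initial_data} is unchanged, the initial energy $E_1^e$ is bounded by the same argument as in Theorem~\ref{thm:main_disc}.

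For the positivity step I apply Lemma~\ref{lem:herglotz} to $\ctder{\gamma}$. Since $\sder \e(0) = 0$ by the choice \eqref{eq:initial_data}, the correction term involving $\sder \e_0$ vanishes for both ranges of $\gamma$. For $\gamma \in (-1,0)$ this already yields full positivity
\[
\tstep\sum_{n=1}^{N-1}(\ctder{\gamma}\sder \e(t_n),\sder \e_n) \geq 0,
\]
and the improvement over Theorem~\ref{thm:main_disc} then comes entirely from replacing Lemma~\ref{lemma:mixederror} by Lemma~\ref{lemma:mixederror_corr} when bounding $\varepsilon_n$. Redoing the integral estimates \eqref{equ:boundingA_gn}--\eqref{equ:boundingA_gp} with the improved pointwise bound, the offending terms $\|\partial_tR_hu(\tstep)\|\tstep$ (for $\gamma<0$) and $\|\partial_t^2R_hu(\tstep)\|\tstep^{2-\gamma}$ (for $\gamma>0$) that produced the $\mathcal{A}_\gamma$ contribution are now replaced by quantities already dominated by $\tstep^2$ and the $c_k$-dependent $\tstep^{1+\alpha}$ terms.

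The hard part is the case $\gamma\in(0,1)$, where Lemma~\ref{lem:herglotz} leaves an additional leakage $-\tstep\sum_n w_{n1}(\sder \e_1,\sder \e_n)$ on the right-hand side that has to be absorbed. The strategy is to show this term is of higher order than $\tstep^2$. From the definition of $w_1(t)$ combined with Lemma~\ref{lem:CQ_pol_corr} one extracts the pointwise estimate $|w_{n1}|\leq Ct_n^{-\gamma-1}\tstep$, so Lemma~\ref{lem:sum} gives $\tstep\sum_n|w_{n1}|=O(\tstep^{1-\gamma})$. A direct analysis of the first step of \eqref{eq:fully_discrete_corr}---solving the implicit equation for $\u_2$ using $\u_0=R_hu_0$ together with the explicit form of $\u_1$ in \eqref{eq:initial_data}, and Taylor expanding $R_hu(t_2)$---yields $\|\e_2\|=O(\tstep^3)$ and hence $\|\sder \e_1\|=\|\e_2\|/(2\tstep)=O(\tstep^2)$. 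Cauchy--Schwarz followed by Young's inequality then bounds the leakage by $C\tstep^{6-2\gamma}$ plus a small fraction of $\max_m E_m^e$, the latter being absorbed into the left-hand side of \eqref{equ:E_m}. The final conversion from the discrete energy norm to the $L^2$ and $H^1$ norms stated in the theorem repeats the end of the proof of Theorem~\ref{thm:main_disc} verbatim.
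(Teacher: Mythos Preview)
Your overall strategy matches the paper's: follow the proof of Theorem~\ref{thm:main_disc}, replace Lemma~\ref{lemma:mixederror} by Lemma~\ref{lemma:mixederror_corr}, and deal with the extra term coming from Lemma~\ref{lem:herglotz} when $\gamma\in(0,1)$. The treatment for $\gamma\in(-1,0)$ and the elimination of the $\mathcal{A}_\gamma$ term are exactly as in the paper.

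The one substantive difference is how you handle the leakage $-\tstep\sum_n w_{n1}(\sder\e_1,\sder\e_n)$ for $\gamma\in(0,1)$. The paper does \emph{not} attempt a separate a priori bound on $\|\sder\e_1\|$. It simply uses the crude inequality $\|\sder\e_1\|\,\|\sder\e_n\|\le \max_m\|\sder\e_m\|^2\le 2\max_m E_m^e$ together with $\tstep\sum_n|w_{n1}|\le C\tstep^{1-\gamma}$, obtaining
\[
\left|\tstep\sum_n w_{n1}(\sder\e_1,\sder\e_n)\right|\le 2C\tstep^{1-\gamma}\max_m E_m^e,
\]
which is absorbed into the left of \eqref{equ:E_m} once $\tstep$ is small enough that $2C\tstep^{1-\gamma}<\tfrac14$. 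This is why the statement of the theorem requires $\tstep\in(0,\bar\tstep)$ for some sufficiently small $\bar\tstep$.

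Your route---proving $\|\e_2\|=O(\tstep^3)$ by direct analysis of the first step---is more laborious and, as stated, has a gap: under Assumption~\ref{ass:u} with $c_3\neq 0$ the Taylor remainder in $R_h u(t_2)$ is only $O(\tstep^{2+\alpha})$, not $O(\tstep^3)$, and the first-step consistency analysis (which involves $\ctder{\gamma}\sder u(t_1)$, an implicit equation in $\u_2$, and the inverse inequality applied to $\nabla\e_1$) is not as clean as you suggest. A weaker bound on $\|\sder\e_1\|$ would still let your Young's-inequality argument go through, but the paper's absorption trick avoids the detour entirely.
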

\begin{proof}
 The proof is a modification of its non-corrected counterpart. The variation lies in the use of Lemma~\ref{lemma:mixederror_corr} and more critically Lemma~\ref{lem:herglotz}.

In particular using the same notation we have that
\[
\tstep\sum_{n=1}^{N-1} \left( \ctder{\gamma} \sder e^h (t_n), \bar{\partial}_t e^h_n \right) \geq -\tstep\sum_{n = 1}^{N-1}\omega_{n1}(\sder e^h(t_1),\sder e^h(t_n)).
\]
For $\gamma \in (-1,0)$, $w_{n1} = 0$ so the proof can proceed in the same way.
 For $\gamma \in (0,1)$ using $|\omega_{n1}| \leq C \tstep t_n^{-\gamma-1}$ we have
\[
\begin{split}
\tstep\sum_{n=1}^{N-1} \left( \ctder{\gamma} \sder e^h (t_n), \bar{\partial}_t e^h_n \right) &\geq -\tstep^2\sum_{n = 1}^{N-1}t_n^{-\gamma-1}\|\sder e^h(t_1)\|\|\sder e^h(t_n)\|\\
&\geq -C \max_n\|\sder e^h(t_n)\|^2 \tstep^2\sum_{n = 1}^{N-1}t_n^{-\gamma-1}\\
&\geq -C\tstep^{1-\gamma} \max_n\|\sder e^h(t_n)\|^2\\
&\geq -2C\tstep^{1-\gamma} \max_n E_n^e.
\end{split}
\]
If $\tstep$ is small enough so that $2C\tstep^{1-\gamma} < \frac14$, this just changes the constant $\frac12$ in \eqref{eq:discEineq} to $\frac34$ and the proof can proceed using Lemma~\ref{lemma:mixederror_corr} instead of Lemma~\ref{lemma:mixederror}, i.e., the error is the same as in Theorem~\ref{thm:main_disc} but with $\mathcal{A}_\gamma = 0$.
\end{proof}

\section{Numerical Results}
\label{sec:numerics}

\subsection{Smooth solution}
First we consider the problem of approximating solutions to \eqref{equ:PDE} in 1D on the interval $\Omega = [0,1]$  with $h=6\tstep$ using the two schemes \eqref{eq:fully_discrete} and \eqref{eq:fully_discrete_corr}. We construct the right hand side $f$ so that the exact solution is given by
\begin{equation}
\label{eq:exact_solution_smooth1d}
u(x,t) = \left( \sin(24t)+\cos(12t) \right)\sin(\pi x).
\end{equation}
We measure the error in the following norm 
\begin{equation}
\label{equ:energy_error}
\text{Error} = \max_n\left\| \frac{u_n - u_{n-1}}{\tstep} - u^\prime \left(t_n - \frac{1}{2}\tstep\right) \right\|+ \left\| \frac{u_n + u_{n-1}}{2} - u \left(t_n - \frac{1}{2}\tstep\right) \right\|
\end{equation}
to compare with the theoretical results in Theorem~\ref{thm:main_disc} and \ref{thm:main_disc_corr}. 

 Solution is smooth in space, hence the error due to the spatial error is $O(h^2) = O(\tstep^2)$. As the solution is smooth in time we can set $c_3 = c_4 = 0$ in Assumption~\ref{ass:u}. Hence Theorem~\ref{thm:main_disc} gives convergence order $O(\tstep)$ for $\gamma < 0$ and order $O(\tstep^{2-\gamma})$ for $\gamma > 0$, whereas Theorem~\ref{thm:main_disc_corr} predicts $O(\tstep^2)$ for all $\gamma$. In Fig.~\ref{fig:1D} we see that the numerical experiments agree with the predicted convergence rates for various  values of $\gamma$, except that  for $\gamma = 0.25$, Fig.~\ref{fig:1D_smooth_025}, we seem to obtain a higher than expected convergence rate $\mathcal{O}(h^2)$ in contrast to the predicted rate $\mathcal{O}(h^{1.75})$. However, by increasing the value of $\alpha_0$ in \eqref{eq:agamma} the predicted convergence rate becomes visible; see Fig.~\ref{fig:highera0}.

\begin{figure}
\centering
\begin{subfigure}{.5\textwidth}
  \centering
  \includegraphics[scale = 0.45]{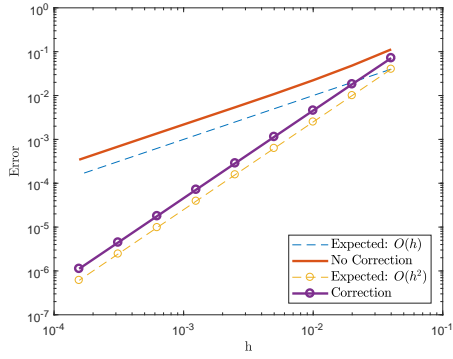}
  \caption{$\gamma = -0.75$}
  \label{fig:1D_smooth_neg075}
\end{subfigure}%
\begin{subfigure}{.5\textwidth}
  \centering
  \includegraphics[scale = 0.45]{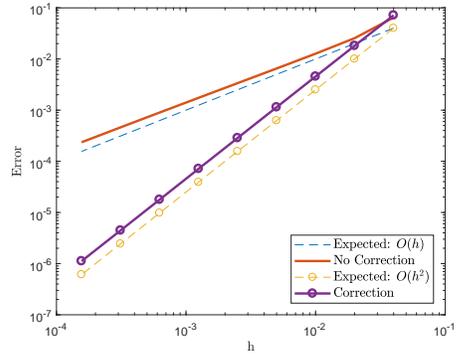}
  \caption{$\gamma = -0.25$}
  \label{fig:1D_smooth_neg025}
\end{subfigure}
\begin{subfigure}{.5\textwidth}
  \centering
  \includegraphics[scale = 0.45]{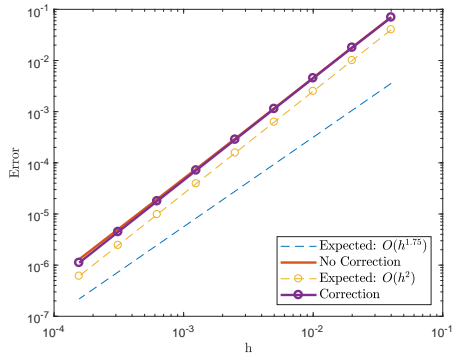}
  \caption{$\gamma = 0.25$}
  \label{fig:1D_smooth_025}
\end{subfigure}%
\begin{subfigure}{.5\textwidth}
  \centering
  \includegraphics[scale = 0.45]{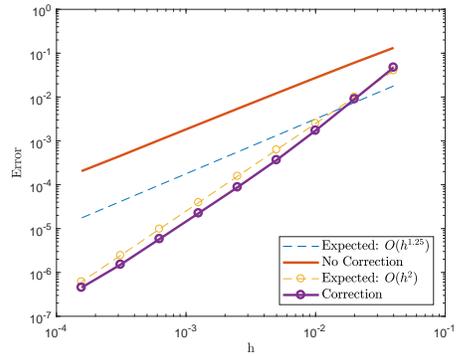}
  \caption{$\gamma = 0.75$}
  \label{fig:1D_smooth_075}
\end{subfigure}
\caption{\label{fig:1D}\small Plot of mesh size $h$ against the maximum error, see \eqref{equ:energy_error}, produced in 1D numerical experiments approximating the exact solution \eqref{eq:exact_solution_smooth1d}, where $h \propto \tstep$. The dashed lines represent the expected convergence rates determined by Theorems \ref{thm:main_disc} and \ref{thm:main_disc_corr}.}
\label{fig:test}
\end{figure}

\begin{figure}
\centering
  \includegraphics[scale = 0.6]{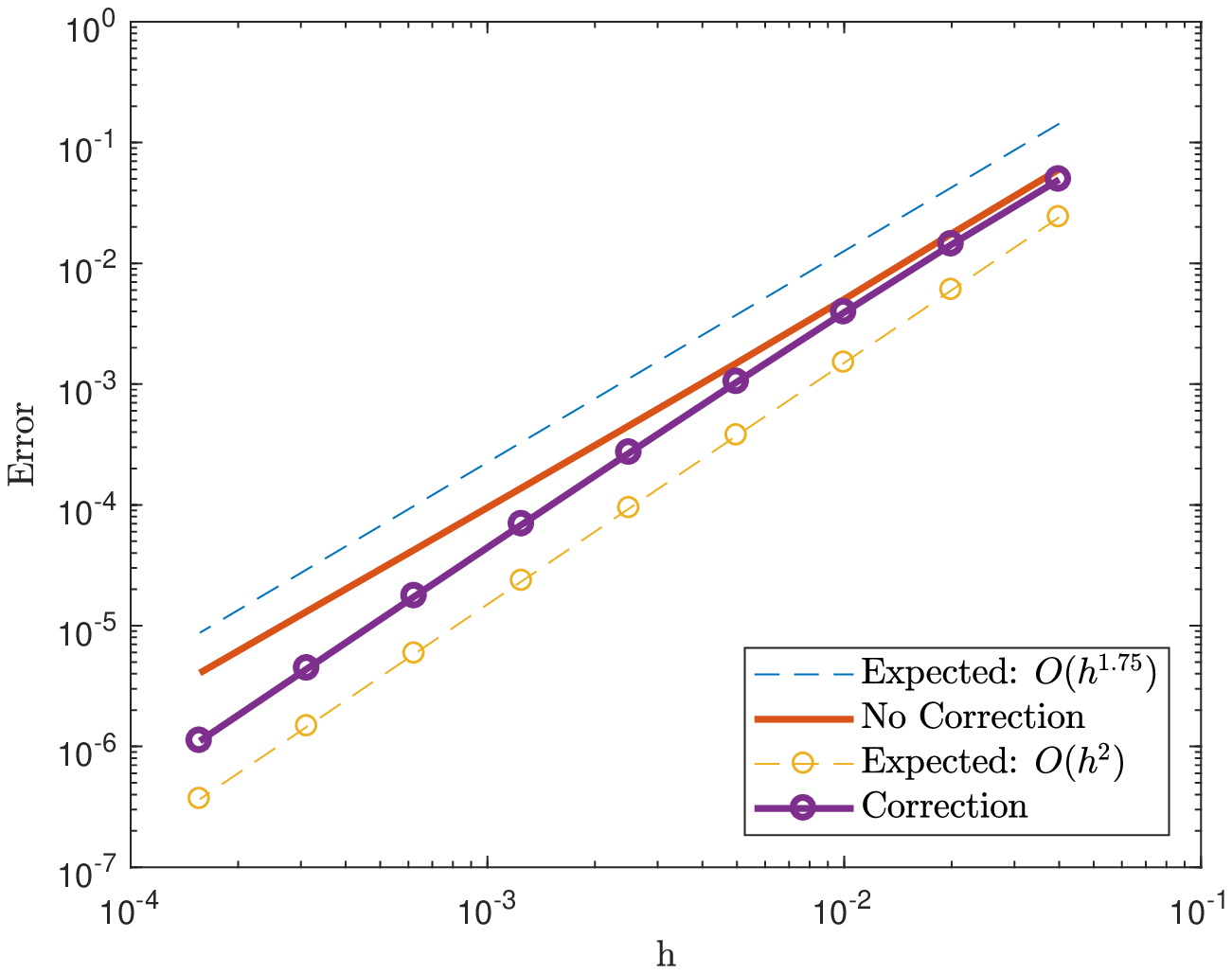}
  \caption{\small A repeat of the experiment in Fig.~\ref{fig:1D_smooth_025} with $\alpha_0$ increased from 1 to 20.}
  \label{fig:highera0}
\end{figure}

Next we perform an experiment in 2D on the domain $\Omega = [-1,1] \times [-1,1]$ with $h=10\tstep$. The right hand side is chosen so that the exact solution is given by
\begin{equation} 
\label{eq:exact_solution_smooth2d}
u(x,t) = \left( \sin(24t)+\cos(12t) \right)\sin(\pi x)\sin(\pi y).
\end{equation}
In 2D experiments we use the $L^2$ error defined as
In Fig.~\ref{fig:2D} we show the convergence of the $L^2$ error
$$
\text{Error} = \max_n\| u_n - u(t_n) \|.
$$
and achieve the expected convergence orders for $\gamma =0.7$ with and without correction terms.

\begin{figure}
\centering
 \includegraphics[scale=0.6]{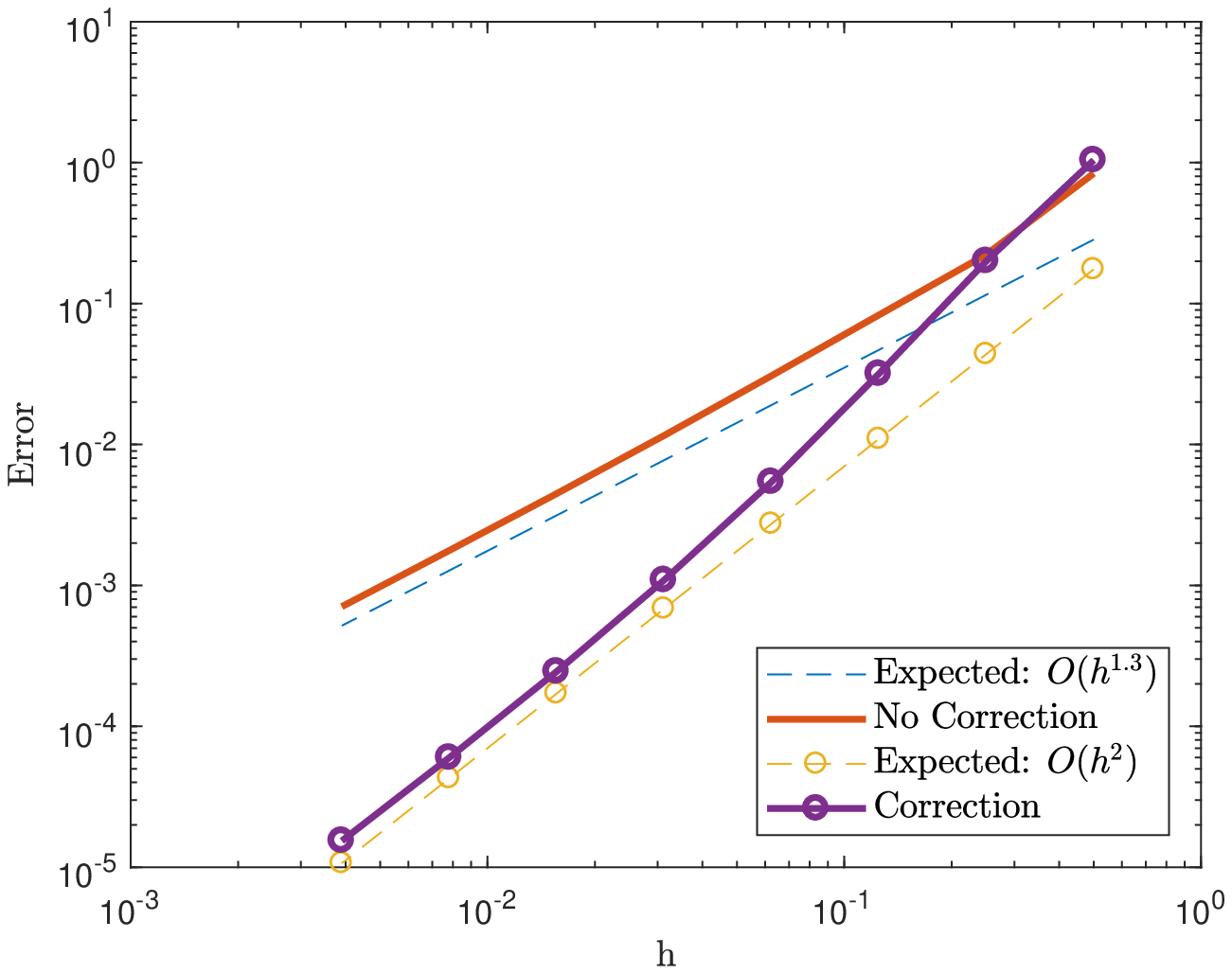}
\caption{\label{fig:2D}\small Plot of mesh size $h$ against the maximum $L^2$~error produced in 2D numerical experiments approximating the exact solution \eqref{eq:exact_solution_smooth2d}, where $h \propto \tstep$. The dashed lines represent the expected convergence rates determined by Theorems \ref{thm:main_disc} and \ref{thm:main_disc_corr}.}
\end{figure}

\subsection{Nonsmooth case}
In the next set of figures we study a more realistic case discussed in Remark~\ref{rem:smoothness_u} in 1D on the domain $\Omega =[0,1]$ with $h=6\tstep$. We now choose the right hand side so that the exact solution is 
\begin{equation}
\label{equ:nonsmooth-exact}
u(x,t) = \left( 1+t + t^2 -\frac{\ag}{\Gamma(3-\gamma+\lceil\gamma\rceil)} t^{2 + \lceil \gamma \rceil-\gamma}\right) \sin(\pi x).
\end{equation}
The error norm is again as in \eqref{equ:energy_error}. Hence the solution satisfies Assumption~\ref{ass:u} with non-zero constants $c_3$, $c_4$ and $\alpha = \lceil \gamma \rceil-\gamma$. Again the solution is smooth in space. 

 The results shown in Fig.~\ref{fig:1D_nonsmooth} generally agree with our claims from Theorem~\ref{thm:main_disc} and Theorem~\ref{thm:main_disc_corr}, except that  in some cases we achieve a higher convergence rate than expected. More specifically, for $\gamma = 0.25$ we observe second order convergence with and without correction terms, and when $\gamma = 0.75$ we have a rate of $\mathcal{O}(\tstep^{1.35})$ with corrected CQ. By increasing $\alpha_0$ we would see the expected convergence rates in these two cases, similarly to the adjustment we see in Fig.~\ref{fig:highera0}.
\begin{figure}
\centering
\begin{subfigure}{.5\textwidth}
  \centering
  \includegraphics[scale = 0.45]{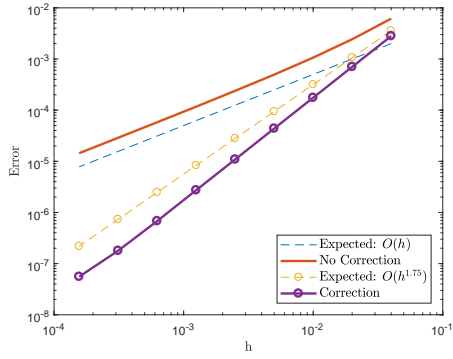}
  \caption{$\gamma = -0.75$}
  \label{fig:1D_nonsmooth_neg075}
\end{subfigure}%
\begin{subfigure}{.5\textwidth}
  \centering
  \includegraphics[scale = 0.45]{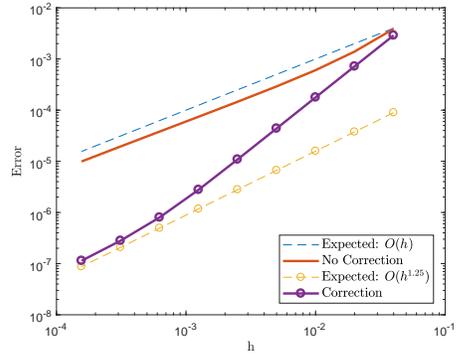}
  \caption{$\gamma = -0.25$}
  \label{fig:1D_nonsmooth_neg025}
\end{subfigure}
\begin{subfigure}{.5\textwidth}
  \centering
  \includegraphics[scale = 0.45]{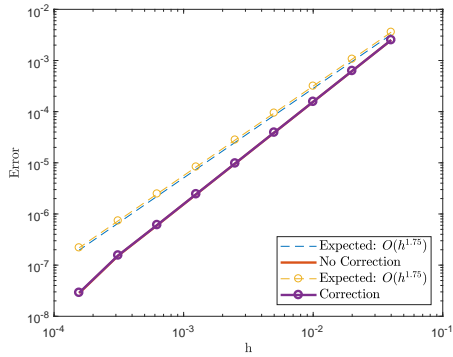}
  \caption{$\gamma = 0.25$}
  \label{fig:1D_nonsmooth_025}
\end{subfigure}%
\begin{subfigure}{.5\textwidth}
  \centering
  \includegraphics[scale = 0.45]{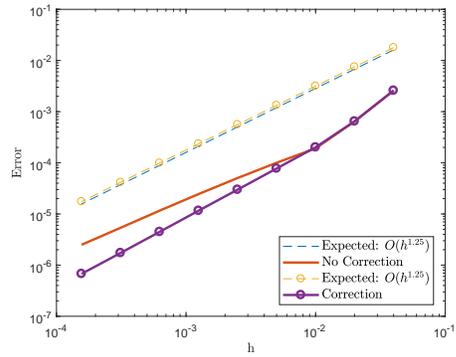}
  \caption{$\gamma = 0.75$}
  \label{fig:1D_nonsmooth_075}
\end{subfigure}
\caption{ \small Plot of mesh size $h$ against the maximum error, see \eqref{equ:energy_error}, produced in 1D numerical experiments approximating the exact solution \eqref{eq:exact_solution_smooth2d}, where $h \propto \tstep$. The dashed lines represent the expected convergence rates determined by Theorems \ref{thm:main_disc} and \ref{thm:main_disc_corr}.}
\label{fig:1D_nonsmooth}
\end{figure}

\subsection{Damping in 2D}
We end the section on numerical experiments, by illustrating the damping effect for the fractional term. Fig.~\ref{fig:damping} shows the profile our approximation of the  solution of the PDE \eqref{equ:PDE} with 
$$
u_0 = e^{-10(x^2+y^2)}, \quad v_0 = 0 \quad \text{and} \quad f=0
$$
at the point $(0,0)$ on the domain $\Omega = [-1,1]\times[-1,1]$. The first plot has no fractional derivative included, i.e., $a_\gamma = 0$,  and the remaining have varying $\gamma$s. In this experiment, the damping effect seems to be strongest for  $\gamma=0.25$. 

\begin{figure}
\centering
\includegraphics[scale=0.5]{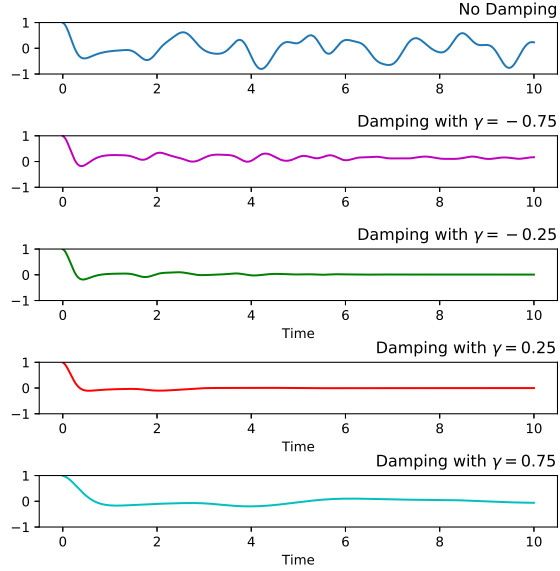}
\caption{\label{fig:damping} \small The profile of a solution to \eqref{equ:PDE} at one point on a 2D mesh with varying levels of damping introduced through changing the order $\gamma$ of the fractional derivative. For the case with no damping we remove the fractional derivative.}
\end{figure}

\bibliographystyle{abbrv}

\appendix

\section{Appendix}

Consider the linear Volterra equation of the second kind
\begin{equation}
  \label{eq:volterra}
  v(t) = g(t)+ \int_0^t(t-\tau)^{-\gamma} k(t-\tau) v(\tau) \, d\tau,
\end{equation}
for $\gamma \in (-1,1)$ and $k \in C[0,T]$. Before stating an existence result, we need a technical lemma the proof of which is elementary.

\begin{lemma}
  Let $g, k \in C[0,T]$. Then for $\beta > -1$
\[
\int_0^t (t-\tau)^\beta k(t-\tau)g(\tau)d\tau = g(0) k(0) \frac{t^{\beta+1}}{\beta+1} + o(t^{\beta+1})  \quad \text{as }t\rightarrow 0^+.
\]
\end{lemma}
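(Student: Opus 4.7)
The plan is to exploit continuity of $g$ and $k$ at the origin to replace $k(t-\tau)g(\tau)$ by the constant $k(0)g(0)$, up to a small error, and then use the explicit value $\int_0^t (t-\tau)^\beta d\tau = t^{\beta+1}/(\beta+1)$ (valid for $\beta > -1$, so the integral converges even in the weakly singular case $\beta \in (-1,0)$).

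The first step is to split the integrand:
\[
\int_0^t (t-\tau)^\beta k(t-\tau)g(\tau)d\tau = k(0)g(0) \int_0^t (t-\tau)^\beta d\tau + \int_0^t (t-\tau)^\beta \bigl[ k(t-\tau)g(\tau) - k(0)g(0) \bigr] d\tau.
\]
The first term evaluates exactly to $k(0)g(0)\, t^{\beta+1}/(\beta+1)$, giving the claimed leading asymptotic term.

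The second step is to control the remainder. Given $\varepsilon > 0$, by continuity of $k$ and $g$ at $0$ and boundedness of $k,g$ on $[0,T]$, we can choose $\delta \in (0,T]$ so small that $|k(s)g(\sigma)-k(0)g(0)| \leq \varepsilon$ whenever $s,\sigma \in [0,\delta]$; this is a routine estimate using the identity $k(s)g(\sigma)-k(0)g(0) = (k(s)-k(0))g(\sigma)+k(0)(g(\sigma)-g(0))$. Then for all $t \in (0,\delta]$ and $\tau \in [0,t]$, both $\tau$ and $t-\tau$ lie in $[0,\delta]$, so the bracket in the remainder is bounded by $\varepsilon$ pointwise, giving
\[
\left| \int_0^t (t-\tau)^\beta \bigl[ k(t-\tau)g(\tau) - k(0)g(0) \bigr] d\tau \right| \leq \varepsilon \int_0^t (t-\tau)^\beta d\tau = \frac{\varepsilon\, t^{\beta+1}}{\beta+1}.
\]
Since $\varepsilon$ is arbitrary, the remainder is $o(t^{\beta+1})$ as $t \to 0^+$, which completes the argument.

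There is essentially no serious obstacle: the only mild subtlety is the integrability of $(t-\tau)^\beta$ when $\beta \in (-1,0)$, but this is immediate from $\beta+1 > 0$, and the bound on the remainder uses only the $L^1$ norm of the singular factor together with an $L^\infty$ bound on the smooth factor, so the weakly singular and regular cases are handled uniformly.
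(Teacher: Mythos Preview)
Your proof is correct and follows essentially the same approach as the paper: both split off the constant $k(0)g(0)$, use continuity of $k$ and $g$ at the origin together with their boundedness on $[0,T]$ to control the remainder by $\varepsilon\,t^{\beta+1}/(\beta+1)$ for $t$ small, and conclude the $o(t^{\beta+1})$ asymptotic. The only difference is cosmetic normalization of $\varepsilon$; your version is slightly more explicit about the weakly singular case $\beta\in(-1,0)$, which the paper leaves implicit.
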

\begin{proof}
  As $k$ and $g$ are continuous, there exist constants $C_k>0$, $C_g>0$ such that $|k(t)| \leq C_k$ and $|g(t)| \leq C_g$. Further for every $\varepsilon > 0$ there exists a $\delta > 0$ such that
\[
\max\{|k(t)-k(0)|,|g(t)-g(0)|\} \leq \varepsilon \qquad \text{for } t \in (0,\delta).
\]
Hence
\[
\left|\int_0^t (t-\tau)^\beta k(t-\tau)g(\tau)d\tau - g(0) k(0) \frac{t^{\beta+1}}{\beta+1}\right|\leq (C_k+C_g)\frac1{\beta+1}t^{\beta+1} \varepsilon,
\]
for any $t \in (0,\delta)$.
\end{proof}
\begin{theorem}\label{thm:brunner}
If  $g \in C[0,T]$, then \eqref{eq:volterra} has a unique solution $v \in C[0,T]$. Furthermore,
\[
v(t) = g(t)+g(0)k(0)  \frac{t^{1-\gamma}}{1-\gamma}+o(t^{1-\gamma}) \quad \text{as }t\rightarrow 0^+.
\]
\end{theorem}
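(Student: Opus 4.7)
My plan is to split the proof into two parts: first, existence and uniqueness in $C[0,T]$, and second, the asymptotic expansion as $t \to 0^+$.

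For existence and uniqueness, I would set up the fixed point operator
\[
(Tv)(t) = g(t) + \int_0^t (t-\tau)^{-\gamma} k(t-\tau) v(\tau) \, d\tau
\]
on $C[0,T]$. For $\gamma \in (-1,0]$ the kernel $(t-\tau)^{-\gamma}k(t-\tau)$ is continuous, so $T$ maps $C[0,T]$ to itself and a standard weighted-norm contraction argument (e.g.\ $\|v\|_\lambda = \sup_t e^{-\lambda t}|v(t)|$ for $\lambda$ large) gives a unique fixed point; this is Theorem~2.1.5 in Brunner's book. For $\gamma \in (0,1)$ the kernel is weakly singular but integrable, so $Tv$ is still continuous for $v \in C[0,T]$, and iteration shows that some power $T^n$ becomes a strict contraction because the iterated kernels gain regularity (concretely, the $n$-fold convolution of $(t-\tau)^{-\gamma}$ is bounded by $C_n t^{n(1-\gamma)-1+\gamma}$, so $\|T^n v - T^n w\|_\infty$ decays with $n$). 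This is exactly the content of the weakly singular Volterra theory in Brunner, Section~6.1, which I would cite rather than re-derive. Either way, I obtain a unique $v \in C[0,T]$.

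For the asymptotic expansion, I would evaluate the Volterra equation at $t = 0$. The integral vanishes (since its absolute value is bounded by $C\int_0^t (t-\tau)^{-\gamma} d\tau = \frac{C}{1-\gamma} t^{1-\gamma} \to 0$), so $v(0) = g(0)$. Rearranging the equation gives
\[
v(t) - g(t) = \int_0^t (t-\tau)^{-\gamma} k(t-\tau) v(\tau) \, d\tau,
\]
and now I apply the technical lemma immediately preceding the theorem with $\beta = -\gamma \in (-1,1)$ and the continuous function $v$ in place of $g$. This yields
\[
\int_0^t (t-\tau)^{-\gamma} k(t-\tau) v(\tau) \, d\tau = v(0) k(0) \frac{t^{1-\gamma}}{1-\gamma} + o(t^{1-\gamma}),
\]
and substituting $v(0) = g(0)$ gives the claimed expansion.

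The main obstacle is the weakly singular case $\gamma \in (0,1)$ in the existence argument, since the integral operator is not a contraction in any obvious norm on $[0,T]$. The cleanest route is to invoke Brunner's existence result directly (he treats exactly this class of kernels), rather than re-proving it via the Neumann series estimate on iterated kernels. The asymptotic part, by contrast, is essentially a one-line application of the preceding lemma once continuity of $v$ and the identity $v(0) = g(0)$ have been established.
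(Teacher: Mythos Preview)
Your proposal is correct and follows essentially the same approach as the paper: cite Brunner's Theorem~2.1.5 for the continuous-kernel case $\gamma \in (-1,0)$ and Theorem~6.1.2 for the weakly singular case $\gamma \in (0,1)$, then obtain the asymptotic expansion by noting $v(0)=g(0)$ and applying the preceding technical lemma with $\beta=-\gamma$ to the integral term. Your additional remarks on the contraction mechanism and the explicit verification of $v(0)=g(0)$ are sound but not needed beyond what the paper records.
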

\begin{proof}
For $\gamma \in (-1,0)$, the kernel $(t-\tau)^{-\gamma}k(t-\tau)$ is continuous and hence the existence of the continuous solution follows from \cite[Theorem~2.1.5]{brunner:book}.  Whereas, for $\gamma \in (0,1)$, \cite[Theorem~6.1.2]{brunner:book} gives the existence of the continuous solution.

The technical lemma together with  the fact that $v(0) = g(0)$ and \eqref{eq:volterra} gives the form of solution for $t \rightarrow 0^+$.
\end{proof}

%
%
The following technical lemma is needed to investigate the error for the nonsmooth solution.
%

\begin{lemma}\label{lem:2nderr}
Let $g \in \mathcal{C}^4(0,T]$ with $g^{(3)}\in L^1[0,T]$. Then for $t \in [2\tstep,T]$
\[
\left|g''(t)-\frac1{\tstep^2}(g(t+\tstep)-2g(t)+g(t-\tstep))\right|
\leq C \tstep\int_{t-\tstep}^{t+\tstep}|g^{(4)}(\tau)|d\tau
\] 
and for $t = \tstep$ 
\[
\left|g''(t)-\frac1{\tstep^2}(g(t+\tstep)-2g(t)+g(t-\tstep))\right|
\leq C \int_{0}^{2\tstep}|g^{(3)}(\tau)|d\tau,
\]
where $C> 0$ is a constant depending on $C_k$ and $\alpha$.
\end{lemma}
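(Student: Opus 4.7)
The plan is to apply Taylor's theorem with integral (Peano kernel) remainders to $g(t \pm \tstep)$ around the center point $t$ and then add the two expansions so that the odd-order terms cancel, leaving the desired second-order central difference.

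For the case $t \in [2\tstep, T]$, first I would note that $[t-\tstep, t+\tstep] \subset [\tstep, T]$, so $g$ is $\mathcal{C}^4$ on this interval. I would expand to order four:
\[
g(t+\tstep) = g(t) + \tstep g'(t) + \tfrac{\tstep^2}{2}g''(t) + \tfrac{\tstep^3}{6}g'''(t) + \tfrac{1}{6}\int_t^{t+\tstep}(t+\tstep-\tau)^3 g^{(4)}(\tau)\,d\tau,
\]
and the analogous expansion for $g(t-\tstep)$ with the backward remainder rewritten (after a sign flip) as $\tfrac{1}{6}\int_{t-\tstep}^t (\tau - t + \tstep)^3 g^{(4)}(\tau)\,d\tau$. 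Adding the two expansions eliminates the first and third derivative terms, so
\[
\frac{g(t+\tstep)-2g(t)+g(t-\tstep)}{\tstep^2}-g''(t) = \frac{1}{6\tstep^2}\int_{t-\tstep}^{t+\tstep} K(\tau)\,g^{(4)}(\tau)\,d\tau,
\]
where the piecewise kernel $K(\tau)$ equals $(t+\tstep-\tau)^3$ or $(\tau-t+\tstep)^3$ and satisfies $|K(\tau)| \leq \tstep^3$. Taking absolute values gives the first estimate with constant $C = 1/6$.

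For the case $t = \tstep$, the interval $[t-\tstep,t+\tstep] = [0, 2\tstep]$ touches the origin where $g$ is only guaranteed $g^{(3)} \in L^1$, so the fourth-order expansion is not available. Instead I would Taylor expand to order three only, obtaining integral remainders with kernels $(t+\tstep-\tau)^2$ and $(\tau-t+\tstep)^2$, both bounded by $\tstep^2$. The linear terms still cancel by symmetry and the quadratic term supplies $\tstep^2 g''(t)$ exactly, so after dividing by $\tstep^2$ the remainder is
\[
\frac{1}{2\tstep^2}\left|\int_t^{t+\tstep}(t+\tstep-\tau)^2 g^{(3)}(\tau)\,d\tau - \int_{t-\tstep}^{t}(\tau - t+\tstep)^2 g^{(3)}(\tau)\,d\tau\right|
\leq \tfrac{1}{2}\int_0^{2\tstep} |g^{(3)}(\tau)|\,d\tau,
\]
which is exactly the second claimed bound.

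There is no real obstacle: the argument reduces to writing down the Peano remainders and exploiting the parity cancellation of the odd-order Taylor terms. The only subtle point is being careful about the regularity cut-off near the boundary $t=0$, which is precisely what forces the switch from a fourth-order expansion (available for $t\geq 2\tstep$) to a third-order one (the best available for $t=\tstep$), and correspondingly turns a $\tstep \|g^{(4)}\|_{L^1}$ bound into a $\|g^{(3)}\|_{L^1}$ bound over the whole window $[0,2\tstep]$.
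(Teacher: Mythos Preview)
Your proposal is correct and follows essentially the same approach as the paper: Taylor expansion with integral (Peano) remainder about the center $t$, using fourth-order remainders for $t\geq 2\tstep$ and third-order remainders for $t=\tstep$, with the odd-order terms cancelling by parity. The only cosmetic difference is that the paper handles the $t=\tstep$ case by writing the third-order expansion for $t>\tstep$ and then passing to the limit $t\to\tstep^+$, whereas you apply the expansion directly at $t=\tstep$; since $g^{(3)}\in L^1[0,T]$ makes $g''$ absolutely continuous up to $0$, your direct application is equally valid.
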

\begin{proof}
Let $t \geq 2\tstep$. Then using the integral form of the remainder
\[
\left|g''(t)-\frac1{\tstep^2}(g(t+\tstep)-2g(t)+g(t-\tstep))\right| \leq C\tstep \int_{t-\tstep}^{t+\tstep} |g^{(4)}(\tau)|d\tau.
\]
This gives the proof for $t \geq 2\tstep$.

Next, for $t > \tstep$ and $s \in [t-\tstep,t+\tstep]$ we have the Taylor expansion
\[
g(s) = g(t)+g'(t)(s-t)+\frac12g''(t) (s-t)^2+\int_t^s(s-\tau)^2g^{(3)}(\tau)d\tau.
\]
Hence
\[
|\frac1{\tstep^2}(g(t+\tstep)-2g(t)+g(t-\tstep)) - g''(t)| \leq C \int_0^{2\tstep}|g^{(3)}(\tau)|d\tau.
\]
Letting $t \to \tstep$ gives the result for $t = \tstep$.
\end{proof}

\section*{Acknowledgements}
Katherine Baker was supported by The Maxwell Institute Graduate School in Analysis and its Applications, a Centre for Doctoral Training funded by the UK Engineering and Physical Sciences Research Council (grant EP/L016508/01), the Scottish Funding Council, Heriot-Watt University and the University of Edinburgh.

We acknowledge discussions with David Sinden formerly of the National Physical Laboratory, UK. Also we gratefully acknowledge comments by Endre S\"uli and the anonymous referees. 

\end{document}